\numberwithin{equation}{section}
\g@addto@macro\bfseries{\boldmath}
\setlist{nolistsep}
\newcolumntype{L}[1]{>{\raggedright\let\newline\\\arraybackslash\hspace{0pt}}m{#1}}
\newcolumntype{C}[1]{>{\centering\let\newline\\\arraybackslash\hspace{0pt}}m{#1}}
\newcolumntype{R}[1]{>{\raggedleft\let\newline\\\arraybackslash\hspace{0pt}}m{#1}}
\newcolumntype{N}{@{}m{0pt}@{}}
\newcommand{\R}{\mathbb{R}} 
\newcommand{\idmatrix}{\textup{\uppercase\expandafter{\romannumeral 1}}}
\newcommand{\mesh}{\mathcal{T}} 
\newcommand{\bdry}{\partial} 
\newcommand{\tr}{\mathrm{tr}} 
\newcommand{\enr}{\mathrm{enr}} 
\DeclareMathOperator{\grad}{grad}
\let\div\relax 
\DeclareMathOperator{\div}{div}
\newtheoremstyle{boldremark}
    {\dimexpr\topsep/2\relax} 
    {\dimexpr\topsep/2\relax} 
    {}          
    {}          
    {\bfseries} 
    {.}         
    {.5em}      
    {}          
\theoremstyle{definition}
\newtheorem*{definition*}{Definition}
\theoremstyle{plain}
\newtheorem{theorem}{Theorem}
\newtheorem{lemma}{Lemma}
\newtheorem{corollary}{Corollary}
\theoremstyle{boldremark}
\newtheorem{remark}{Remark}
\theoremstyle{definition}
\newtheorem{assumption}{Assumption}
\newcommand{\dashto}[1][2pt]{
  \settowidth{\@tempdima}{${}\rightarrow{}$}
  \makebox[\@tempdima]{${}\rightarrow{}$}
  \makebox[-\@tempdima]{\hspace{-0.1\@tempdima}\color{white}\rule[0.5ex]{#1}{1pt}}
  \makebox[\@tempdima]{}
  }
\newcommand{\mcO}{\mathcal{O}}
\newcommand{\mcR}{\mathcal{R}}
\newcommand{\bsi}{\boldsymbol{i}}
\newcommand{\bsj}{\boldsymbol{j}}
\newcommand{\bsq}{\boldsymbol{q}}
\newcommand{\bsv}{\boldsymbol{v}}
\newcommand{\bsw}{\boldsymbol{w}}
\newcommand{\bsL}{\boldsymbol{L}}
\newcommand{\bsS}{\boldsymbol{S}}
\newcommand{\scB}{\mathscr{B}}
\newcommand{\scC}{\mathscr{C}}
\newcommand{\scM}{\mathscr{M}}
\newcommand{\scU}{\mathscr{U}}
\newcommand{\scV}{\mathscr{V}}
\newcommand{\scW}{\mathscr{W}}
\newcommand{\nml}{\mathbf{n}}
\DeclareRobustCommand\widecheck[1]{{\mathpalette\@widecheck{#1}}}
\def\@widecheck#1#2{%
    \setbox\z@\hbox{\m@th$#1#2$}%
    \setbox\tw@\hbox{\m@th$#1%
       \widehat{%
          \vrule\@width\z@\@height\ht\z@
          \vrule\@height\z@\@width\wd\z@}$}%
    \dp\tw@-\ht\z@
    \@tempdima\ht\z@ \advance\@tempdima2\ht\tw@ \divide\@tempdima\thr@@
    \setbox\tw@\hbox{%
       \raise\@tempdima\hbox{\scalebox{1}[-1]{\lower\@tempdima\box
\tw@}}}%
    {\ooalign{\box\tw@ \cr \box\z@}}}
\newcommand{\bfT}{\mathbf{T}}
\DeclareMathOperator{\meas}{meas}
\newcommand{\tend}{t_\text{end}}
\newcommand{\interface}{\Gamma_{\textrm{int}}}
\newcommand{\iapp}{I_{\textrm{app}}}
\newcommand{\ibv}{I_{\textrm{BV}}}
\newcommand{\kbv}{k_{\textrm{BV}}}
\newcommand{\csmax}{c_{s,\text{max}}}
\newcommand{\ocp}{\Phi_{s,\textrm{open}}}
\newcommand{\ocpsa}{\Phi_{sa,\textrm{open}}}
\newcommand{\Gpos}{\Gamma_{\text{cc}+}}
\newcommand{\Gneg}{\Gamma_{\text{cc}-}}
\newcommand{\Gtop}{\Gamma_{\text{top}}}
\newcommand{\Gbot}{\Gamma_{\text{bottom}}}
\newcommand{\prev}{{\textrm{prev}}}
\newcommand{\dt}{\Delta t}
\tikzstyle{startstop} = [rectangle, rounded corners, minimum width=1.8cm, minimum height=0.5cm,text centered, draw=black, fill=red!30]
\tikzstyle{io} = [trapezium, trapezium left angle=70, trapezium right angle=110, minimum width=1.8cm, minimum height=0.5cm, text centered, draw=black, fill=blue!30]
\tikzstyle{process} = [rectangle, minimum width=1.8cm, minimum height=0.5cm, text centered, draw=black, fill=orange!30]
\tikzstyle{decision} = [diamond, minimum width=1.8cm, minimum height=0.5cm, text centered, draw=black, fill=green!30]
\tikzstyle{forloop} = [chamfered rectangle, chamfered rectangle xsep = 0.5cm, text centered, draw=black, fill=teal!30]
\tikzstyle{cont} = [circle, minimum width=0.3cm,draw=black,fill=teal!30]
\tikzstyle{arrow} = [thick,->,>=stealth]
\def\be{\begin{equation}}
\def\ee{\end{equation}}
\def\ba{\begin{array}}
\def\ea{\end{array}}
\def\bea{\begin{eqnarray}}
\def\eea{\end{eqnarray}}
\def\beas{\begin{eqnarray*}}
\def\eeas{\end{eqnarray*}}
\begin{document}
%
\title{Formulation and analysis of a DPG discretization for a simplified electrochemical model}
\author[1]{Jaime Mora-Paz\thanks{e-mail: jaimed.morap@konradlorenz.edu.co}}
\affil[1]{Fundación Universitaria Konrad Lorenz, Bogota DC, Colombia}
\date{\vspace{-10mm}}
%
\maketitle
\renewcommand{\abstractname}{\large Abstract}
\begin{abstract}
\small 
\noindent
We present a simplified model consisting on two linear elliptic boundary-value problems that represent a single step and single fixed-point iteration in an electrochemical battery model. The main variables are the concentration and the electric potential, whose equation is assigned a Robin BC with a very important physical interpretation. The solvability of both equations is studied in different funcional settings, to finally prove the well-posedness of a broken mixed variational formulation. The latter formulation opens the opportunity of performing discretization and numerical solution via the Discontinuous Petrov-Galerkin (DPG) method, which guarantees discrete stability thanks to optimal test functions. With only the usual assumptions on the data and the discretization, we show that the method herein proposed is convergent. This analytical effort complements other recent works on batttery multiphysics, that have been more modeling and computing oriented, and establishes the DPG approach as a valuable tool to contribute toward an efficient analysis and design of batteries.
\noindent\textbf{Keywords: battery, DPG, well-posedness, variational formulation, broken spaces}.
\end{abstract}
\section{Introduction}
\label{sec:Introduction}
Two appearingly independent boundary value problems (BVP) are next presented.
First, what we call the \emph{concentration problem}, since the unknown $c:\Omega\to \R$ represents
the volumetric concentration of some species in a medium extended over $\Omega\subset\R^d$, where $d\in\{2,3\}$. 
The parameters will be explained below, but as is we recognize the form of a reaction-diffusion PDE with Neumann boundary condition (BC).
\begin{equation}
    \left\{
    \begin{array}{rll}
         c- \dt D\div\grad c &=c_\prev  &\text{in }\Omega  \vspace{0.2cm}\\
        (-D\grad c)\cdot\nml &=J    & \text{on }\bdry\Omega
    \end{array}
    \right.
    \label{eq:conce_order2_strong}
\end{equation}
In second place, we introduce the \emph{potential problem}, where we seek for the scalar field $\phi:\Omega\to\R$,
that stands for an electric potential distributed over the same $\Omega$. Contrary to the first problem, 
the potential equation has the form of a Poisson equation (or Laplace equation, if $\div\bsS$ vanishes) with mixed boundary conditions, but we want to stress on the last one, a Robin BC. In this problem the boundary is decomposed in disjoint subsets  $\Gamma_D,\; \Gamma_N,\; \Gamma_R \subset \bdry\Omega$, such that$\bdry\Omega = \overline{\Gamma_D \cup \Gamma_N \cup \Gamma_R}$.
\begin{equation}
    \left\{
    \begin{array}{rll}
                    -\kappa\div\grad \phi &= \div\bsS & \text{in }\Omega  \vspace{0.2cm}  \\
                                     \phi &= 0 & \text{on }\Gamma_D  \\
              (-\kappa\grad\phi)\cdot\nml &= I & \text{on }\Gamma_N  \\
    (-\kappa\grad\phi)\cdot\nml-\beta\phi &= R & \text{on }\Gamma_R
    \end{array}
    \right.
    \label{eq:poten_order2_strong}
\end{equation}

The two problems are already in the form of a second order linear PDE. However, it is also frequent to deal with the corresponding diffusive fluxes. Vector field $\bsj$ will be called the species flux, while $\bsi$ is the current density, and their respective constitutive laws are given by
\begin{align}
    \bsj &= -D\grad c \; ,  \label{eq:ionflux_def}\\
    \bsi &= -\kappa \grad \phi - \bsS \;.  \label{eq:currdens_def}
\end{align}
Whenever the constitutive laws are not incorporatted into the equations of the first principles (conservation of mass and Gauss's law), we then deal with first-order systems. The concentration problem in the form of a first-order BVP is presented in \eqref{eq:conce_order1_strong}, whereas the potential problem is reformulated as in \eqref{eq:poten_order1_strong}.
\begin{equation}
    \left\{
    \begin{array}{rll}
            c + \dt\div\bsj &= c_\prev  & \text{in }\Omega  \\
        D^{-1}\bsj +\grad c &=0        & \text{in }\Omega  \vspace{0.2cm}  \\
             \bsj\cdot \nml &=J        & \text{on }\bdry\Omega
    \end{array}
    \right.
    \label{eq:conce_order1_strong}
\end{equation}
\begin{equation}
    \left\{
    \begin{array}{rll}
                        \div \bsi &= 0                & \text{in }\Omega  \\
        \kappa^{-1}\bsi+\grad\phi &= -\kappa^{-1}\bsS & \text{in }\Omega  \vspace{0.2cm}  \\
                             \phi &= 0                & \text{on }\Gamma_D  \\
                    \bsi\cdot\nml &= I                & \text{on }\Gamma_N  \\
          \bsi\cdot\nml-\beta\phi &= R                & \text{on }\Gamma_R
    \end{array}
    \right.
    \label{eq:poten_order1_strong}
\end{equation}

\subsection{Motivation}
Where do the concentration and potential problems come from? For the sake of the present discussion, we consider the electrochemical model for a lithium-ion battery microstructure. In it, three different materials are at interplay: the negative electrode or anode (subscript $sa$), the positive electroode or cathode (subscript $sc$) and the electrolytic medium or electrolyte (subscript $e$). In each material or medium, different concentration fields and potential fields are accounted for. The spatial distribution, time evolution and interacion of those fields is governed by the following initial-boundary value problem with six unknowns.
\begin{equation}
\left\{
    \begin{array}{rll}
        \dot{c_{sa}}
        - \div \left(D_{sa}\grad c_{sa}\right)
            & = 0
            & \text{ in }\Omega_{sa} \times(0,\tend] ,   \\
        \dot{c_{sc}}
        - \div \left(D_{sc}\grad c_{sc}\right)
            & = 0
            & \text{ in }\Omega_{sc} \times(0,\tend] ,   \\
        \dot{c_e}
        - \div \left(D_e\grad c_e\right)
            & = 0
            & \text{ in }\Omega_e \times(0,\tend] ,  \\
        -\div \left(\sigma_{sa}\grad \phi_{sa}\right) 
            &= 0
            & \text{ in }\Omega_{sa} \times(0,\tend] ,  \\
        -\div \left(\sigma_{sc}\grad \phi_{sc}\right) 
            &= 0
            & \text{ in }\Omega_{sc} \times(0,\tend] ,  \\
        -\div\left(\kappa_e\grad\phi_e\right) 
            & = \div\left(\kappa_D\grad \ln c_e\right)
            & \text{ in }\Omega_e \times(0,\tend] .
    \end{array}
\right.      
\label{eq:pde_system}
\end{equation}
Of course, the PDEs are accompanied by appropriate boundary and initial conditions. Some of that is discussed below. Because of the difference in material properties, in the constitutive laws for the fluxes we must specify both the variable and diffusivity coefficient in question:
\begin{align*}
    \bsj_{sa} &= -D_{sa}\grad c_{sa} \; ,  \\
    \bsj_{sc} &= -D_{sc}\grad c_{sc} \; ,  \\
    \bsj_{e}  &= -D_{e} \grad c_{e} \; ,  \\
    \bsi_{sa} &= -\sigma_{sa} \grad \phi_{sa} \; ,  \\
    \bsi_{sc} &= -\sigma_{sc} \grad \phi_{sc} \; ,  \\
    \bsi_{e}  &= -\kappa_{e} \grad \phi_{e} - \kappa_{D} \grad \ln c_{e} \,.
\end{align*}
Despite the number of equations herein proposed, notice that all of the $\phi$ and $\bsi$ equations can be accomodated into the \emph{potential} problem above (whether as a second order PDE or a first-order system). As for the \emph{concentration} problem, we regard the model \eqref{eq:conce_order2_strong} as a backward Euler approximation in time, so that the dotted term is replaced by $(c-c_{\prev})/\dt$, where $\Delta t$ is a small number standing for the time integration's step length. As seen in \cite{bai_chemo-mechanical_2021,miranda_effect_2019,gatica_analysis_2018,gritton_using_2017} several of those diffusivity constants are variable, sometimes in a non-linear coupled fashion. We do not delve into that possibility here, but we mention it because the interaction among variables may require a fixed-point iterative approach. If that's the case, the model problems for concentration and potential are somehow the idealization of one iteration, within one time integration step. Moreover, it is possible to solve the problem in a staggered way: first the time dependent equation (concentration), later the time independent one (potential), as in \cite{mora2024high}. That is why, in the model problems \eqref{eq:conce_order1_strong}, \eqref{eq:poten_order1_strong}, \eqref{eq:conce_order2_strong} and \eqref{eq:poten_order2_strong} we are free from non-linearities, because we want to analyze the mere application of variational formulations on that very specific steady and linear instance of an otherwise dynamic and nonlinear phenomenos, and the finite element discretizations for their numerical solution, with the most general possible conditions on the material parameters and the loads.

However, the coupling among materials must be added to the model, because the interfacial electrochemical reaction is what drives all the physical processes within a battery that is being discharged or charged.
\subsubsection{Butler-Volmer Reaction Kinetics}
\label{subsec:butler-volmer}
The electrochemical reaction that arises on the electrode-electrolyte interface is the main mechanism that couples the physical processes occurring in the electrodes to those in the electrolyte. If there is certain electric potential difference across the interface between one solid electrode and the electrolytic solution, an electrochemical reaction occurs, so producing transport of electrically charged particles (Lithium ions) from one side to another. That transport induces an electric current density normal to the surface, denoted $\ibv$. The Butler-Volmer formula models the magnitude of the current, depending on the concentrations and potentials of both media:
\begin{equation}
    \begin{aligned}
    \ibv & = \frac{I_c F}{R T} \eta \ , \\
    I_c & = \kbv F\, c_e^{\sfrac{1}{2}}\left(\csmax-c_s\right)^{\sfrac{1} {2}}c_s^{\sfrac{1}{2}}\ ,
\end{aligned}
\label{eq:butler-volmer} 
\end{equation}
where $F$ is Faraday's constant, $R$ is the universal gas constant, $T$ is the absolute temperature (assumed to be constant),
$I_c$ is known as the exchange current density, $\kbv$ is a specific reaction constant, $\csmax$ is the saturation or maximum concentration possible at each electrode, and $\eta$ represents the overpotential:
\begin{equation}
    \eta=\phi_s-\phi_e-\ocp(\check{c}_s) \,.
    \label{eq:overpotential}
\end{equation}
In \eqref{eq:overpotential}, $\ocp$ denotes the so called open circuit potential, an electrode's material property that varies depending on the local \emph{state of charge} (see \cite{doyle1993modeling} for examples of experimentally fitted functions for the open-circuit potential), 
\begin{equation}
    \check{c}_s=c_s/\csmax.
    \label{eq:stateofcharge}
\end{equation}

The dependence on the potentials $\phi_e$ and $\phi_s$ in \eqref{eq:butler-volmer} has been linearized, as explained in \cite{mora2024high}, because it originally portrays exponential functions or a hyperbolic sine function \cite{linden2002handbook,trembacki_fully_2015}. The computation of $\ibv$, $I_c$, $\eta$, $\ocp$ and $\check{c}$ is carried out using the traces of $c_s$, $c_e$, $\phi_s$, $\phi_e$ on $\interface$. 

Butler-Volmer equation is also useful for determining the species flux produced by the reaction, which is  proportional to $\ibv$ through Faraday's constant $F$.

\subsubsection{Boundary and interface conditions in the original electrochemical model}
Figure \ref{fig:boundaries-diagram} shows the parts of the boundary where the BC and the interface conditions are to be imposed. Since we have assumed that the domain is a small representative part of the battery structure, it is reasonable to prescribe symmetry-like BC on some portions of $\Gamma$.
\begin{figure}
    \centering
    \includegraphics[width=0.35\textwidth]{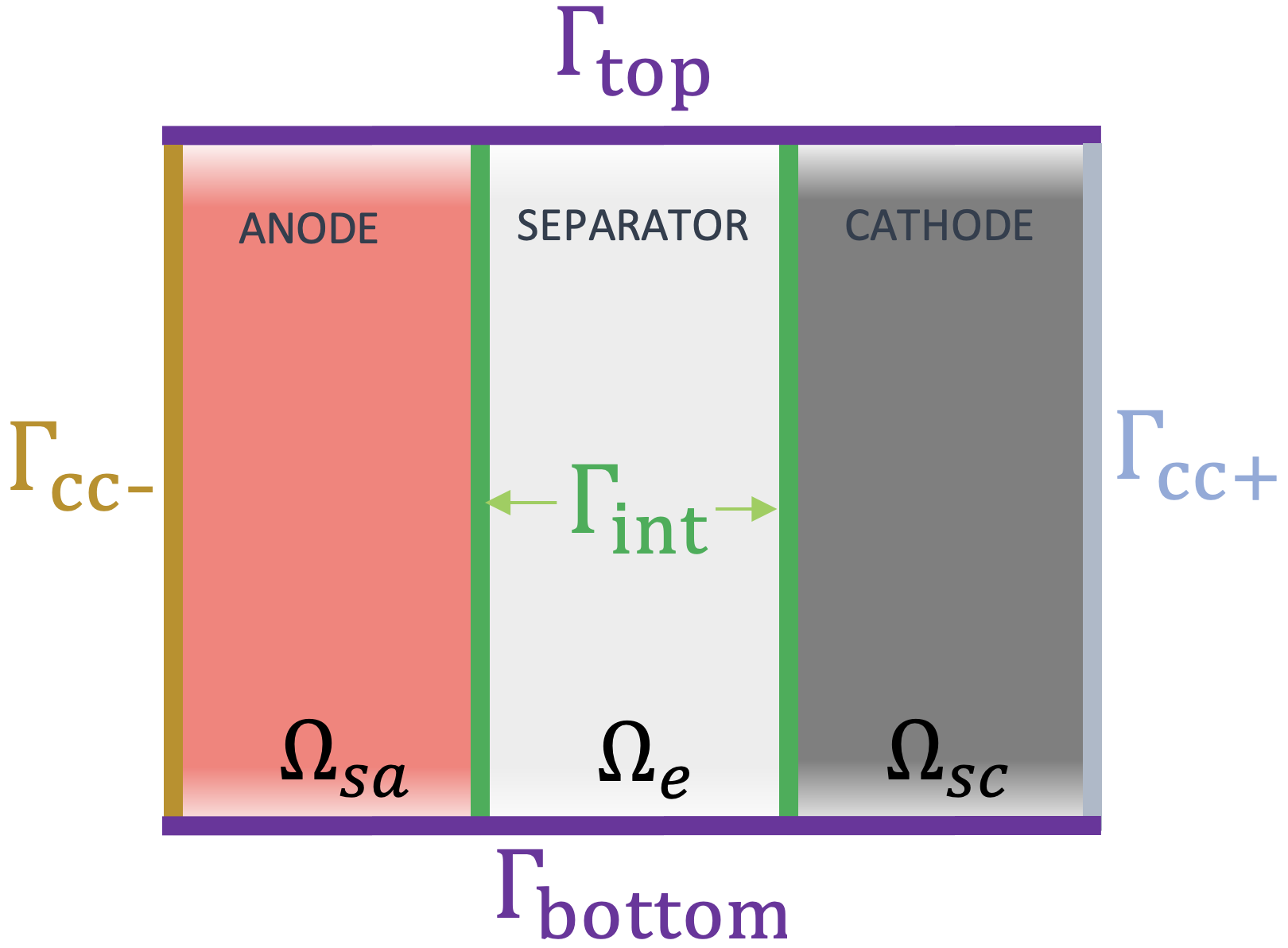}
    \caption{Battery diagram showing boundary portions that are referenced in the specification of boundary conditions.}
    \label{fig:boundaries-diagram}
\end{figure}
\paragraph{Boundary, interface and initial conditions for concentration in the anode}.
\begin{equation}
\left\{ 
\begin{array}{rll}
    \bsj_{sa}\cdot\nml &= 0       &\text{ on }(\bdry\Omega_{sa}\cap \Gtop) \cup (\bdry\Omega_{sa}\cap \Gbot)  \\
    \bsj_{sa}\cdot\nml &= \ibv/F  &\text{ on }(\bdry\Omega_{sa}\cap \Gamma_{e}) \cup (\bdry\Omega_{sa}\cap \Gamma_{e}) \\
    c_{sa}(\cdot,0)    &= c_{e,0} &\text{ in }\Omega_{sa}\times\{0\}
\end{array}
\right.
\end{equation}
\paragraph{Boundary, interface and initial conditions for concentration in the cathode}.
\begin{equation}
\left\{ 
\begin{array}{rll}
    \bsj_{sc}\cdot\nml &= 0       &\text{ on }(\bdry\Omega_{sc}\cap \Gtop) \cup (\bdry\Omega_{sc}\cap \Gbot)  \\
    \bsj_{sc}\cdot\nml &= \ibv/F  &\text{ on }(\bdry\Omega_{sc}\cap \Gamma_{e}) \cup (\bdry\Omega_{sc}\cap \Gamma_{e}) \\
    c_{sc}(\cdot,0)    &= c_{e,0} &\text{ in }\Omega_{sc}\times\{0\}
\end{array}
\right.
\end{equation}
\paragraph{Boundary, interface and initial conditions for concentration in the electrolyte}.
\begin{equation}
\left\{ 
\begin{array}{rll}
    \bsj_{e} \cdot\nml &= 0             &\text{ on }(\bdry\Omega_{e}\cap \Gtop) \cup (\bdry\Omega_{sc}\cap \Gbot) \\
    \bsj_{e} \cdot\nml &=-(1-t_+)\ibv/F &\text{ on }(\bdry\Omega_{e}\cap \Gamma_{sa}) \cup (\bdry\Omega_{e}\cap \Gamma_{sc}) \\
    c_{e} (\cdot,0)    &= c_{e,0}       &\text{ in }\Omega_{e}\times\{0\}
\end{array}
\right.
\end{equation}
\paragraph{Boundary and interface conditions for potential in the anode}.
\begin{equation}
\left\{ 
\begin{array}{rll}
    \phi_{sa}          &=0    &\text{ on }\Gneg \\
    \bsi_{sa}\cdot\nml &=0    &\text{ on }(\bdry\Omega_{sa}\cap \Gtop) \cup (\bdry\Omega_{sa}\cap \Gbot) \\
    \bsi_{sa}\cdot\nml &=\ibv &\text{ on }(\bdry\Omega_{sa}\cap \bdry\Omega_e)
\end{array}
\right.
\end{equation}
\paragraph{Boundary and interface conditions for potential in the cathode}.
\begin{equation}
\left\{ 
\begin{array}{rll}
    \bsi_{sc}\cdot\nml &=\iapp &\text{ on }\Gpos \\
    \bsi_{sc}\cdot\nml &=0     &\text{ on }(\bdry\Omega_{sc}\cap \Gtop) \cup (\bdry\Omega_{sc}\cap \Gbot) \\
    \bsi_{sc}\cdot\nml &=\ibv  &\text{ on }(\bdry\Omega_{sc}\cap \bdry\Omega_e)
\end{array}
\right.
\end{equation}
\paragraph{Boundary and interface conditions for potential in the electrolyte}.
\begin{equation}
\left\{ 
\begin{array}{rll}
    \bsi_{e}\cdot\nml &=0     &\text{ on }(\bdry\Omega_{e}\cap \Gtop) \cup (\bdry\Omega_{sc}\cap \Gbot) \\
    \bsi_{e}\cdot\nml &=-\ibv &\text{ on }(\bdry\Omega_{e}\cap \Gamma_{sa}) \cup (\bdry\Omega_{e}\cap \Gamma_{sc})
\end{array}
\right.
\end{equation}

\subsubsection{Connection with the model potential problem}
The Robin BC in the model problem \eqref{eq:poten_order1_strong} is obtained from the incorporation of Butler-Volmer's equation, as it relates the current density and the electric potential. All the other terms are moved to the load. Next, we show the correspondence between the symbols in the model problem and, for example, the actual anode equation.
\begin{equation*}
    \bsi_{sa}\cdot\nml - \underbrace{ \frac{I_c F}{R T} }_{\beta}\phi_{sa} = 
    \underbrace{\frac{I_c F}{R T}(-\phi_{e} - \ocpsa(\check{c}_{sa}) )}_{R}
\end{equation*}
When solving the actual complex system with a numerical method, the usage of fixed-point iterations lets us focus on each subdomain at a time. That is why we have simplified the equations to the above called \emph{concentration} and \emph{potential} problems. That reductioon does not mean, however, that we are dealing with a trivial problem, but with an important stepping stone in the way to understand and possibly construct more strategies for solving the multiphysical models of rechargeable batteries.

\subsection{General assumptions and notation}
The name \emph{domain} means exactly a connected, open and bounded subset in $\R^d$. We restrict the functional spaces to domains with Lipschitz or piecewise smooth boundaries, and they contain real-valued functions only. For the potential problem, $\Gamma_D$ may be empty (if not, then we also assume $\meas(\Gamma_D)>0$), but both $\Gamma_R$ and $Gamma_N$ must be non-empty and with positive measure.

The inner product of $L^2(\Omega)$ scalar fields, or $\bsL^2(\Omega)$ vector fields over certain (Lipschitz) subdomain $\omega\subseteq\Omega$ is denoted $(\cdot,\cdot)_\omega$. The corresponding norm is denoted $\|\cdot\|_{\omega}$, both for scalars or vectors. We drop the domain subscript when there is no place for ambiguity. In what follows, we work with the standard $H^1(\omega)$ norm,
\begin{equation*}
\| u \|^2_{1}:=\| u \|^2 + \| \grad u \|^2.
\end{equation*}
An auxiliary space for us will be the $H(\div,\omega)$ space, which contains vector fields whose divergence lies in $L^2$, and have norm
\begin{equation*}
\|V\|_{\div,\omega} := \| \|^2 + \| \|^2.
\end{equation*}
Let $\gamma$ be a subdomain boundary, or part of it, i.e., $\gamma\subseteq\bdry\omega$. We define the pairing $\langle\cdot,\cdot\rangle_{\gamma}$ as the $L^2(\gamma)$ inner product, that it
$$
\langle u,v\rangle_{\gamma}:=\int_\gamma u\,v\, ds.
$$
The application of this pairing on functions defined on the domain means that we act on their restritions to $\gamma$. We require additional spaces defined on boundaries.
The $H^{1/2}(\gamma)$ space, where $\gamma$ is a subdomain boundary, or part of it, $\gamma\subseteq\bdry\omega$, is endowed with the so called \emph{minimum extension norm}:
\begin{equation*}
    \| u \|_{1/2,\gamma}:=\inf\{\|U\|_{1}:\ U\in H^1(\omega),U|_{\gamma}=u \}.
\end{equation*}
The $H^{-1/2}(\gamma)$ space comes with the norm
\begin{equation*}
    \| v \|_{-1/2,\gamma}:=\inf\{\|V\|_{\div,\omega}:\ V\in H(\div,\omega),(V\cdot\nml)|_{\gamma}=v \}.
\end{equation*}
Notice that, if $\gamma$ is an entire boundary, we can see $H^{-1/2}(\gamma)$ and $H^{1/2}(\gamma)$ as dual spaces of each other \cite{McLeanSobolev}. In that case, we extend $\langle\cdot,\cdot\rangle_{\gamma}$ as the duality pairing for $H^{1/2}(\gamma) \times H^{-1/2}(\gamma)$ \cite[\S 5.3.3]{demkowicz2023mathematical}. It this is the case, an equivalent norm is therefore the operator norm
\begin{equation*}
    \| v \|_{-1/2,\gamma}:=
    \sup\limits_{\begin{array}{c} u\in H^{1/2}(\gamma)\\\| u \|_{1/2,\gamma}=1 \end{array}}
     \langle u,v \rangle_{\gamma}.
\end{equation*}

\subsection{Goal, scope and outline of this paper}
In order to understand the behaviour of different variational formulations applied to the model concentration and potential problems introduced above, we apply fundamentals of functional analysis to get well-posedness theorems on different functional settings. Ultimately, one of these formulations enables the use of a poweful finite element methodology, known as DPG, which is stable by design, and a flexible approach with respect to the functional spaces in play. In the future, DPG can provide us with the capacity of automatic adaptive meshing, so that we can apply the proposed discrete setting to complicated battery simulations, once it gets integrated into a dynamic nonlinear solver.

We explain what the reader will find in this paper. In section 2, we formulate the concentration and potential problems in infinite-dimensional Hilbert spaces, and achieve well-posedness results. In section 3, some preliminaries on broken Sobolev spaces are brought over, so that we can apply them to show the solvability of broken mixed variational formulations for each model. The fourth section explains all the main considerations to be able to successfully discretize the broken formulations, and attain a stable convergent finite element method. Section 5 closes this work with some final remarks.
\section{Well-posedness of the continuous problem}

\subsection{Classical weak formulation}

We begin by establishing the well-posedness (i.e., solution's existence, uniqueness and continuous dependence on the data) of the classical variational formulation for the concentration problem \eqref{eq:conce_order2_strong}.
\begin{equation}
\left\{
\begin{array}{ll}
\text{Find }c\in H^1(\Omega) \text{ such that:}      \\
        \left(c,r\right)_\Omega + \dt\left(D\grad c,\grad r\right)_\Omega \;=\; 
        \left(c_\prev,r\right)_\Omega - \dt \langle J, r \rangle_{\bdry\Omega}
        \qquad \text{for every } r\in H^1(\Omega)   
\end{array}
\right.
\label{eq:conce_classical_weak}
\end{equation}
In order to prove that \eqref{eq:conce_classical_weak} is well-posed, we assume a handful of facts.
\begin{assumption}    
Coefficients $D$ and $\dt$ are positive parameters, sufficiently small ($\dt, D < 1$). Loads $c_\prev$ and $J$ belong to $H^1(\Omega)\subset L^2(\Omega)$ and $H^{-1/2}(\bdry\Omega)$, respectively.
\end{assumption}
Despite its simplicity, we state and prove Lemma \ref{thm:conce_classical_well-posed} for completeness.
\begin{lemma}
    \label{thm:conce_classical_well-posed}
    Let Assumption 1 hold. Then, problem \eqref{eq:conce_classical_weak} is well-posed and for its unique solution $c\in H^1(\Omega)$ there holds
    \begin{equation*}
        \| c \|_{1} \leq \frac{1}{D\dt} \|c_{prev}\| + \frac{1}{D} \| J \|_{H^{-1/2}} \;.
    \end{equation*}
\end{lemma}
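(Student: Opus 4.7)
The plan is to prove Lemma \ref{thm:conce_classical_well-posed} by applying the Lax--Milgram theorem to the bilinear form $a(c,r):=(c,r)_\Omega + \dt\,(D\grad c,\grad r)_\Omega$ and the linear functional $\ell(r):=(c_\prev,r)_\Omega - \dt\,\langle J,r\rangle_{\bdry\Omega}$, both defined on $H^1(\Omega)$. This is the standard route, and the quantitative bound at the end will come by plugging the solution itself into the coercivity estimate.

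First, I would check the three hypotheses of Lax--Milgram. Continuity of $a$ is immediate from Cauchy--Schwarz, since $|a(c,r)|\le \|c\|\,\|r\| + \dt D\,\|\grad c\|\,\|\grad r\| \le \max(1,\dt D)\|c\|_1\|r\|_1$. Coercivity follows because
\begin{equation*}
a(c,c)=\|c\|^2+\dt D\|\grad c\|^2 \ge \min(1,\dt D)\,\|c\|_1^2 = \dt D\,\|c\|_1^2,
\end{equation*}
using Assumption~1 so that $\min(1,\dt D)=\dt D$. Continuity of $\ell$ requires a little more care only in the boundary term: since $c_\prev\in L^2(\Omega)$, we have $|(c_\prev,r)_\Omega|\le \|c_\prev\|\,\|r\|\le \|c_\prev\|\,\|r\|_1$, and for the Robin-type duality term I would invoke the characterization of $\|\cdot\|_{-1/2,\bdry\Omega}$ as an operator norm against $H^{1/2}(\bdry\Omega)$, combined with the trace inequality $\|r|_{\bdry\Omega}\|_{1/2,\bdry\Omega}\le \|r\|_1$ (which is immediate from the minimum-extension definition given in the preliminaries, taking $U=r$ itself). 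This gives $|\langle J,r\rangle_{\bdry\Omega}|\le \|J\|_{-1/2,\bdry\Omega}\|r\|_1$. Lax--Milgram then yields the unique solution $c\in H^1(\Omega)$.

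For the a priori bound, I would test with $r=c$: coercivity gives $\dt D\,\|c\|_1^2\le a(c,c)=\ell(c)$, and the two estimates just obtained yield
\begin{equation*}
\dt D\,\|c\|_1^2 \le \|c_\prev\|\,\|c\|_1 + \dt\,\|J\|_{-1/2,\bdry\Omega}\,\|c\|_1.
\end{equation*}
Dividing by $\|c\|_1$ (trivial if it vanishes) and then by $\dt D$ delivers exactly
\begin{equation*}
\|c\|_1 \le \frac{1}{\dt D}\|c_\prev\| + \frac{1}{D}\|J\|_{-1/2,\bdry\Omega},
\end{equation*}
which is the stability bound claimed in the lemma.

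There is essentially no hard step here; the only delicate point is the treatment of the boundary pairing $\langle J,r\rangle_{\bdry\Omega}$, since $J\in H^{-1/2}(\bdry\Omega)$ is not an $L^2$ function and the integral notation of the pairing introduced in the preliminaries must be interpreted as the duality pairing between $H^{-1/2}(\bdry\Omega)$ and $H^{1/2}(\bdry\Omega)$. Once that is acknowledged, together with the trace theorem, both the continuity of $\ell$ and the explicit constant in the stability bound fall out cleanly, and the proof requires neither Poincaré-type inequalities nor any smallness condition beyond $\dt,D<1$ used to identify $\min(1,\dt D)=\dt D$.
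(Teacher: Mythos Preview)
Your proof is correct and follows essentially the same route as the paper: both apply Lax--Milgram by checking coercivity (with constant $\dt D$), continuity of the bilinear form, and continuity of the linear functional, then derive the stability bound from the coercivity constant. Your treatment is in fact slightly more explicit about the duality interpretation of $\langle J,r\rangle_{\bdry\Omega}$ and about extracting the precise constants via testing with $r=c$.
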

\begin{proof}
    As the problem is fully linear, symmetric and formulated on Hilbert spaces, the result is attained by a straightforward application of Lax-Milgram's theorem \cite[\S 6.2.1]{evans2010partial}. Cauchy-Schwartz inequality together with the assumptions on the parameters easily produce
    \begin{align*}
        \dt D \| u \|^2_{1} \leq \left(u,u\right)_\Omega + \dt\left(D\grad u,\grad u\right)_\Omega\;, \\
        |\left(u,r\right)_\Omega + \dt\left(D\grad u,\grad r\right)_\Omega| \leq \| u \|_{1} \| r \|_{1} \;,\\
        |\left(c_\prev,r\right)_\Omega - \dt \langle J, r \rangle_{\bdry\Omega}|\leq 
        \left(\| c_\prev\| +\dt \| J \|_{H^{-1/2}}  \right) \|r\|_{1}\,.
    \end{align*}
    The first inequality represents the coercivity of the bilinear form on $H^1(\Omega)$, the second bound shows its continuity on $H^1(\Omega)\times H^1(\Omega)$, and the last one is the continuity of the linear functional on $H^1(\Omega)$. Given these three conditions, Lax-Milgram ensures that the solution to the weak formulation \eqref{eq:conce_classical_weak} exists, that it is unique, and that it depends continuously on the data precisely through the given estimate.
\end{proof}

The classical variational formulation for the potential problem \eqref{eq:poten_order2_strong} reads
\begin{equation}
\left\{
\begin{array}{ll}
\text{Find }\phi\in H^1_{\Gamma_D}(\Omega) \text{ such that:}      \\
\left(\kappa\grad \phi,\grad \zeta\right)_\Omega + \langle \beta\phi, \zeta \rangle_{\Gamma_R}\;=\; 
    -\left(\bsS,\grad \zeta\right)_\Omega + \langle R, \zeta \rangle_{\Gamma_R} + \langle I, \zeta \rangle_{\Gamma_N}
        \qquad \text{for every } \zeta\in H^1_{\Gamma_D}(\Omega)   
\end{array}
\right.
\label{eq:poten_classical_weak}
\end{equation}
where we have introduced the subspace $H^1_{\Gamma_D}(\Omega)=\{u\in H^1(\Omega):\ u|_{\Gamma_D}=0\}$. The relevant assumptions on the data are presented next.
\begin{assumption} Coefficient  $\kappa$ is a positive constant, whereas $\beta$ may be a function on $\Gamma_R$ , positive, bounded and non-degenerate, namely
$$
\beta\in L^\infty(\Gamma_R),\ 0<\beta_{\inf} :=  \inf\limits_{\Omega} \beta,\  \beta_{\sup}:= \sup\limits_{\Omega} \beta  < +\infty; 
$$
$\bsS$ is an $L^2$ vector field in $\Omega$, and the Robin and Neumann loads satisfy $R\in L^\infty(\Gamma_R)$, and $I\in L^\infty(\Gamma_N)$, respectively.
\end{assumption}
\begin{lemma}
    \label{thm:poten_classical_well-posed}
    Let Assumption 2 hold. Then, problem \eqref{eq:poten_classical_weak} is well-posed and for its unique solution $\phi\in H^1_{\Gamma_D}(\Omega)$ there holds
    \begin{equation*}
        \| \phi \|_{1} \leq \frac{M}{\alpha_{\phi}} \left( \|\bsS\| + \| R \|_{H^{-1/2}} + \| I \|_{H^{-1/2}} \right)
    \end{equation*}
\end{lemma}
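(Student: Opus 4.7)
My plan is to mirror the approach of Lemma~\ref{thm:conce_classical_well-posed} and apply the Lax--Milgram theorem on the closed subspace $H^1_{\Gamma_D}(\Omega)\subset H^1(\Omega)$, equipped with the standard $H^1$-norm. The relevant objects are the symmetric bilinear form
\[
a(\phi,\zeta):=(\kappa\grad\phi,\grad\zeta)_\Omega+\langle\beta\phi,\zeta\rangle_{\Gamma_R}
\]
and the linear functional
\[
\ell(\zeta):=-(\bsS,\grad\zeta)_\Omega+\langle R,\zeta\rangle_{\Gamma_R}+\langle I,\zeta\rangle_{\Gamma_N}.
\]

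Continuity of $a$ follows from Cauchy--Schwarz combined with the uniform bound $\beta\le\beta_{\sup}$ on $\Gamma_R$ and the boundedness of the Dirichlet trace $H^1(\Omega)\to H^{1/2}(\bdry\Omega)$, which together give some $C_a>0$ with $|a(\phi,\zeta)|\le C_a\|\phi\|_1\|\zeta\|_1$. For $\ell$, the volume contribution is estimated via Cauchy--Schwarz as $|(\bsS,\grad\zeta)_\Omega|\le\|\bsS\|\,\|\zeta\|_1$, while both surface contributions are handled using the $H^{-1/2}/H^{1/2}$ duality pairing together with the trace theorem; Assumption~2 (the $L^\infty$ hypothesis on $R$ and $I$) guarantees membership in the relevant $H^{-1/2}$ spaces. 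Collecting these three bounds yields $|\ell(\zeta)|\le M(\|\bsS\|+\|R\|_{H^{-1/2}}+\|I\|_{H^{-1/2}})\|\zeta\|_1$ for some $M>0$.

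The main obstacle is proving coercivity, since Assumption~2 explicitly permits $\Gamma_D=\emptyset$. Starting from
\[
a(\phi,\phi)\ge\kappa\|\grad\phi\|^2+\beta_{\inf}\|\phi\|^2_{\Gamma_R},
\]
I would split into two cases. When $\meas(\Gamma_D)>0$, the Poincar\'e--Friedrichs inequality on $H^1_{\Gamma_D}(\Omega)$ gives $\|\phi\|\le C_P\|\grad\phi\|$, so the gradient term alone already controls the full $H^1$-norm. When $\Gamma_D=\emptyset$, I would instead invoke the generalized Poincar\'e inequality
\[
\|\phi\|^2\le C\bigl(\|\grad\phi\|^2+\|\phi\|^2_{\Gamma_R}\bigr),
\]
valid whenever $\meas(\Gamma_R)>0$ and provable by the usual compactness/contradiction argument combining Rellich's embedding with the continuity of the trace. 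In either case one arrives at $a(\phi,\phi)\ge\alpha_\phi\|\phi\|^2_1$, with $\alpha_\phi$ depending on $\kappa$, $\beta_{\inf}$, and the relevant Poincar\'e constant.

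With continuity and coercivity in place, Lax--Milgram delivers existence, uniqueness, and the a priori bound $\|\phi\|_1\le\alpha_\phi^{-1}\|\ell\|_{(H^1_{\Gamma_D})^*}$; substituting the continuity estimate for $\ell$ recovers precisely the inequality stated in the lemma, identifying $M$ and $\alpha_\phi$ with the continuity and coercivity constants just obtained.
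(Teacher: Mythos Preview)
Your proposal is correct and follows essentially the same Lax--Milgram strategy as the paper: continuity of $a$ and $\ell$ via Cauchy--Schwarz and trace bounds, coercivity via a compactness argument, then the stability estimate. The only cosmetic difference is that you package the coercivity step as an appeal to a named generalized Poincar\'e inequality (with a case split on whether $\Gamma_D$ is empty), whereas the paper carries out the Rellich--Kondrachov contradiction argument directly and uniformly, using only the Robin part $\Gamma_R$; since you yourself note that the generalized inequality is proved by exactly that compactness/contradiction argument, the two presentations are equivalent.
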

    for some positive constants $M,\alpha_{\phi}$.
\begin{proof}
    Let $b^{\scC}_{\phi}(\phi,\zeta)$ and $\ell(\zeta)$ be the bilinear form on the left-hand side of \eqref{eq:poten_classical_weak}, and the linear functional on the right-hand side, respectively. The latter can be easily bounded by 
    $$
    \left( \|\bsS\| + \| R \|_{H^{-1/2}} + \| I \|_{H^{-1/2}} \right)\|\phi\|_H^1
    $$
    just with Cauchy-Schwartz's inequality and the fact that the $H^{-1/2}$ norm is indeed an operator norm on $H^1$ functions \cite{BrokenForms15}.
    
    Regarding $b^{\scC}_{\phi}(\cdot,\cdot)$, it is clear that it is continuous on $H^1_{\Gamma_D}(\Omega) \times H^1_{\Gamma_D}(\Omega)$, with bounding constant $M=\max\{\kappa,\beta_{\sup}\}$. 
    We claim that it is also coercive, that is, that there is some $\alpha$ such that 
    $\alpha_{\phi} \| \phi \|^2_{1}\leq b(\phi,\phi)$ for any $\phi\in H^1_{\Gamma_D}(\Omega)$. 
    The argument, which goes by contradiction, is standard and uses compact embeddings in Sobolev spaces.

    Suppose there is no such constant. Then, for any $n>0$ there exists $u_n\in H^1_{\Gamma_D}$ such that $\| u \|^2_{1}> n\, b^{\scC}_{\phi}(u_n,u_n)$. Let us then construct a sequence $\{v_n\}_{n=1}^\infty$ of functions with unit norm by
    $$
    v_n = \tfrac{u_n}{\| u \|_{1}}.
    $$
    Notice that $b^{\scC}_{\phi}(v_n,v_n)<1/n$, which implies $\beta_{\inf}\|v_n\|^2_{L^2(\Gamma_R)}<1/n$ and $\kappa\|\grad v_n\|^2<1/n$, by definition of $b$. Since $\{v_n\}$ is a bounded sequence, by the Rellich-Kondrachov theorem \cite[\S 5.7]{evans2010partial}, there exists a convergent subsequence $v_{n_k}\to v$ in $L^2(\Omega)$. Per the above observations, we have $\grad v_{n_k} \to 0$ strongly in $L^2(\Omega)$.
    For any smooth test vector field $\zeta\in  C^\infty_0(\Omega;\R^d)$ there holds
    $$
     \int_\Omega  v \div\zeta dx =
     \lim\limits_{{n_k}\to\infty} \int_\Omega  v_{n_k} \div\zeta dx = 
    -\lim\limits_{{n_k}\to\infty} \int_\Omega \grad v_{n_k}\cdot\zeta dx = 
    0,
    $$
    which means that $\grad v_{n_k}\to \grad v$ in $L^2$. Consequently, $v\in H^1$ and since all the $v_{n_k}$ have unit norm, $\| v \|_{1}=1$ too. However, we see that $\grad v = 0$, meaning that $v$ is constant, and because of the continuity of the trace $H^1\to H^{1/2}(\Gamma_R)\hookrightarrow L^2(\Gamma_R)$, the above result $\|v_{n_k}\|_{L^2(\Gamma_R)}\to 0$ implies that $v|_{\Gamma_R}=0$. The only possibility is that $v\equiv 0$, but this contradicts the fact that $v$ has unit norm. Thus the coercivity is proven, and again by Lax-Milgram, we obtain the theorem's bound.
\end{proof}

\begin{remark}
    In Lemma \ref{thm:poten_classical_well-posed}, the most important aspect is that the result holds even when $\Gamma_D=\varnothing$. If that were not the case, it would facilitate the coercivity proof by bringing over a more usual Poincaré's inequality.
\end{remark}

\subsection{Mixed formulations}
By relaxing the balance equation of problem \eqref{eq:conce_order1_strong}, we can get the following mixed variational formulation.
\begin{equation}
\left\{
\begin{array}{ll}
\text{Find }c\in H^1(\Omega),\;\bsj\in \bsL^2(\Omega) \text{ such that:}      \\
        \left(c,r\right)_\Omega - \dt\left(\bsj,\grad r\right)_\Omega \;=\; 
        \left(c_\prev,r\right)_\Omega - \dt \langle J, r \rangle_{\bdry\Omega}
        \qquad \text{for every } r\in H^1(\Omega)    \\
        \left(\grad c,\bsw\right)_\Omega + \left(D^{-1}\bsj, \bsw\right)_\Omega \;=\; 
        0  \qquad \text{for every } \bsw\in \bsL^2(\Omega)
\end{array}
\right.
\label{eq:conce_mixed_weak}
\end{equation}

Similarly, in the potential equation, from \eqref{eq:poten_order1_strong} we can get to
\begin{equation}
\left\{
\begin{array}{ll}
\text{Find }\phi\in H^1_{\Gamma_D}(\Omega),\;\bsj\in \bsL^2(\Omega) \text{ such that:}      \\
        \langle \beta \phi,\zeta \rangle_{\Gamma_R} - \left(\bsi,\grad \zeta\right)_\Omega \;=\; 
        - \langle I, \zeta \rangle_{\Gamma_N} - \langle R, \zeta \rangle_{\Gamma_R}
        \qquad \text{for every } \zeta\in H^1_{\Gamma_D}(\Omega)    \\
        \left(\grad \phi,\bsv\right)_\Omega + \left(\kappa^{-1}\bsi, \bsv\right)_\Omega \;=\; 
        -\left(\kappa^{-1}\bsS,\bsv\right)_\Omega  \qquad \text{for every } \bsv\in \bsL^2(\Omega)
\end{array}
\right.
\label{eq:poten_mixed_weak}
\end{equation}
In the case of the mixed formulations, in addition to uniqueness and existence of $(c,\bsj)$ or $(\phi,\bsi)$, the well-posedness comprises being able to bound their norms in $H^1(\Omega) \times \bsL^2(\Omega)$. For this purpose, it turns quite helpful to move to a more general framework than coercivity. We prefer to incorporate inf-sup conditions on the bilinear forms $b:\scU\times\scV\to\R$, for Hilbert spaces $\scU$ (trial) and $\scV$ (test), that is:
\begin{equation}
    \exists \alpha>0:\ \alpha\| u\|_{\scU} \leq \sup\limits_{v\in\scV\setminus\{0\}}\frac{b(u,v)}{\|v\|_{\scV}},\ \forall u\in\scU.
    \label{eq:infsup_abstract}
\end{equation}
By the Babu\v{s}ka-Ne\v{c}as theorem \cite{DemkowiczClosedRange}, complying with this condition delivers well-posedness for the abstract variational problem
\begin{equation}
    \left\{ \begin{array}{cc}
    \text{Find }u\in\scU \text{ such that:}      \\
     b(u,v)=\ell(v) \qquad \text{for every } v\in\scV,
    \end{array}
    \right.
    \label{eq:variational_abstract}
\end{equation}
for any continuous linear functional $\ell\in\scV'$ that vanishes on the subspace $\scV_0$, given by
\begin{equation}
    \scV_0=\{v\in\scV:\ b(u,v)=0\ \forall u\in\scU\}.
    \label{eq:transpose_nullspace}
\end{equation}
\begin{remark}
In the problem at hand, as it will be explained below, this subspace is trivial, thereby $\ell$ needn't be restricted, delivering a broader scope to the notion of well-posedness. Particularly, the existence, uniqueness and continuous data dependence may extend beyond the specifications in assumptions 1 and 2, because the dual space $\scV'$ might include continuous functionals outside those limitations, and even then we are covered by Babu\v{s}ka-Ne\v{c}as' result.    
\end{remark}

Let us define the functional spaces and bilinear forms for the problems we have introduced
\paragraph{Classical weak formulation for the concentration problem:}
\begin{itemize}
    \item \emph{Trial space:} $\scU^{\scC}_{c}:=H^1(\Omega)\ni c$.
    \item \emph{Test space:}  $\scV^{\scC}_{c}:=H^1(\Omega)\ni r$.
    \item \emph{Bilinear form:} $b^{\scC}_{c}:\scU^{\scC}_{c} \times \scV^{\scC}_{c}$, with
    $$
    b^{\scC}_{c}(c,r):=\left(c,r\right)_\Omega + \dt\left(D\grad c,\grad r\right)_\Omega\,.
    $$
\end{itemize}
\paragraph{Classical weak formulation for the potential problem:}
\begin{itemize}
    \item \emph{Trial space:} $\scU^{\scC}_{\phi}:=H^1_{\Gamma_D}(\Omega)\ni \phi$.
    \item \emph{Test space:}  $\scV^{\scC}_{\phi}:=H^1_{\Gamma_D}(\Omega)\ni \zeta$.
    \item \emph{Bilinear form:} $b^{\scC}_{\phi}:\scU^{\scC}_{\phi} \times \scV^{\scC}_{\phi}$, with
    $$
    \left(\kappa\grad \phi,\grad \zeta\right)_\Omega + \langle \beta\phi, \zeta \rangle_{\Gamma_R}\,.
    $$
\end{itemize}
\paragraph{Mixed variational formulation for the concentration problem:}
\begin{itemize}
    \item \emph{Trial space:} $\scU^{\scM}_{c}:=H^1(\Omega)\times\bsL^2(\Omega)\ni (c,\bsj)$.
    \item \emph{Test space:}  $\scV^{\scM}_{c}:=H^1(\Omega)\times\bsL^2(\Omega)\ni (r,\bsw)$.
    \item \emph{Bilinear form:} $b^{\scM}_{c}:\scU^{\scM}_{c} \times \scV^{\scM}_{c}$, with
    $$
    b^{\scM}_{c}((c,\bsj),(r,\bsw)):=\left(c,r\right)_\Omega + \dt\left(D\grad c,\grad r\right)_\Omega
    \left(\grad c,\bsw\right)_\Omega + \left(D^{-1}\bsj, \bsw\right)_\Omega \,.
    $$
\end{itemize}
\paragraph{Mixed variational formulation for the potential problem:}
\begin{itemize}
    \item \emph{Trial space:} $\scU^{\scM}_{\phi}:=H^1_{\Gamma_D}(\Omega)\times\bsL^2(\Omega)\ni (\phi,\bsi)$.
    \item \emph{Test space:}  $\scV^{\scM}_{\phi}:=H^1_{\Gamma_D}(\Omega)\times\bsL^2(\Omega)\ni (\zeta,\bsv)$.
    \item \emph{Bilinear form:} $b^{\scM}_{\phi}:\scU^{\scM}_{\phi} \times \scV^{\scM}_{\phi}$, with
    $$
    b^{\scM}_{\phi}((\phi,\bsi),(\zeta,\bsv)):=\left(\kappa\grad \phi,\grad \zeta\right)_\Omega + \langle \beta\phi, \zeta \rangle_{\Gamma_R}
    \left(\grad \phi,\bsv\right)_\Omega + \left(\kappa^{-1}\bsi, \bsv\right)_\Omega \,.
    $$
\end{itemize}

We now show that both the classical formulation and the mixed one must be simultaneously well-posed or ill-posed. 
\begin{lemma}
\label{thm:mutually_well-posed}
    The following statements hold true:
    \begin{enumerate}[label=(\roman*)]
        \item problem \eqref{eq:conce_classical_weak} is well-posed if and only if problem \eqref{eq:conce_mixed_weak} is well-posed;
        \item problem \eqref{eq:poten_classical_weak} is well-posed if and only if problem \eqref{eq:poten_mixed_weak} is well-posed.
    \end{enumerate}
\end{lemma}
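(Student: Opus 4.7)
The plan is to prove each equivalence by constructing an explicit bijection between the solutions of the two formulations. The crucial observation is that the second equation of each mixed formulation, where the test field ranges over all of $\bsL^2(\Omega)$, is equivalent to the pointwise constitutive identity between flux and gradient. Hence any solution to the mixed formulation automatically encodes a solution to the classical one, and vice versa; well-posedness of one problem therefore transfers directly to the other without ever having to verify an inf-sup condition on the mixed system in isolation.

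For part (i), I would first treat the forward direction: assuming \eqref{eq:conce_classical_weak} is well-posed, let $c\in H^1(\Omega)$ be its unique solution and set $\bsj := -D\grad c \in \bsL^2(\Omega)$. The second equation of \eqref{eq:conce_mixed_weak} is then trivially satisfied, and substitution into the first equation reduces it to \eqref{eq:conce_classical_weak}, so $(c,\bsj)$ solves the mixed problem. For the converse, assume \eqref{eq:conce_mixed_weak} is well-posed with unique solution $(c,\bsj)$. Testing the second mixed equation against every $\bsw\in\bsL^2(\Omega)$ forces $\grad c + D^{-1}\bsj = \mathbf{0}$ in $\bsL^2(\Omega)$, hence $\bsj = -D\grad c$ almost everywhere. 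Substituting this identity into the first mixed equation recovers \eqref{eq:conce_classical_weak}, so $c$ solves the classical problem. Uniqueness transfers in both directions via the correspondence, and continuous dependence transfers through the elementary bound $\|\bsj\| = D\|\grad c\| \leq D\|c\|_{1}$, so an estimate on $c$ controls the pair $(c,\bsj)$, and conversely the $H^1$-component of any bound on $(c,\bsj)$ immediately controls $c$.

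The argument for part (ii) is entirely analogous, the only genuine difference being that the second mixed equation in \eqref{eq:poten_mixed_weak} carries the nonzero source $-\kappa^{-1}\bsS$, so the pointwise identification becomes $\bsi = -\kappa\grad\phi - \bsS$. Substituting this identity into the first equation recovers \eqref{eq:poten_classical_weak}, and the transfer of continuous dependence proceeds via $\|\bsi\| \leq \kappa\|\grad\phi\| + \|\bsS\|$ together with the bound already furnished by Lemma \ref{thm:poten_classical_well-posed}. The main subtlety throughout is justifying that the variational identity obtained by testing against all of $\bsL^2(\Omega)$ yields the pointwise constitutive law; this follows immediately from the Riesz representation theorem on $\bsL^2(\Omega)$, so the proof reduces to careful bookkeeping rather than any new analytical machinery.
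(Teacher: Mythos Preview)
Your proposal is correct and follows essentially the same approach as the paper: exploit that the second mixed equation, tested against all of $\bsL^2(\Omega)$, enforces the constitutive identity $\bsj=-D\grad c$ (respectively $\bsi=-\kappa\grad\phi-\bsS$) pointwise, and use this to pass solutions back and forth between the two formulations. Your write-up is in fact more complete than the paper's, which only spells out the ``only if'' direction of (i) and the ``if'' direction of (ii) and leaves the remaining implications to the reader.
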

\begin{proof}
    First, consider the ``only if'' implication of (i). Take the unique solution $c\in H^1(\Omega)$, and recall that $\grad c \in \bsL^2(\Omega)$. Define $\bsj=-D\grad c$, that becomes uniquely determined too, and solves the second equation of \eqref{eq:conce_mixed_weak}. Since $D^{-1}\|\bsj\|=\|\grad c\|\leq\|c\|_1$, then the $\bsL^2$ norm of $\bsj$ is continuously controlled by the same data-dependent expression that bounds $\|c\|_1$. The same reasoning is applicable to (ii). Next, we prove the ``if'' direction in (ii). By hypothesis, $\phi\in H^1_{\Gamma_D}(\Omega)$ and $\bsi\in\bsL^2(\Omega)$ are the unique solutions for \eqref{eq:poten_mixed_weak}, which verify
    \begin{equation*}
    \|\phi\|_1 + \| \bsi\| \leq C \left(\|\bsS\|+\| I \|_{-1/2,\Gamma_N} + \| R \|_{-1/2,\Gamma_R}\right). 
    \end{equation*}
    Since the constitutive equation $\bsi=-\grad\phi-\bsS$ is strongly satisfied (i.e., in $\bsL^2(\Omega)$), then we can substitute for the flux in the balance equation, such that for each $\zeta$ there holds
    $$
    \langle \beta \phi,\zeta \rangle_{\Gamma_R} - 
    (\underbrace{-\grad\phi-\bsS}_{\bsi},\grad \zeta)_\Omega \;=\; 
        - \langle I, \zeta \rangle_{\Gamma_N} - \langle R, \zeta \rangle_{\Gamma_R}\;.
    $$
    After rearranging terms, it is clear that we retrieve the classical variational formulation \eqref{eq:poten_classical_weak}, so that $\phi$ is a solution for it as well. 
    The continuous dependence is automatically given by the hypothesis, just by dropping the $\|\bsi\|$ term on the right-hand side of the last bound. 
\end{proof}

Combining the well-posedness of the classical formulations, i.e., lemmas \ref{thm:conce_classical_well-posed} and \ref{thm:poten_classical_well-posed}, with the mutual well-posedness of those and the mixed formulations, namely, lemma \ref{thm:mutually_well-posed}, and the triviality of $\scV^0$ for each case, the immediate consequence is stated in the following corollary.
\begin{corollary}
\label{thm:mixed_well-posedness}
    Let assumptions 1 and 2 hold. Then, problems \eqref{eq:conce_mixed_weak} and \eqref{eq:poten_mixed_weak} are well-posed.
\end{corollary}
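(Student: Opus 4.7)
The plan is to chain the three preceding lemmas and to confirm that the dual null-space $\scV^0$ is trivial for each of the two mixed formulations, so that the Babu\v{s}ka--Ne\v{c}as theorem yields well-posedness without any restriction on the right-hand side.

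First I would invoke Lemma \ref{thm:conce_classical_well-posed} under Assumption 1 and Lemma \ref{thm:poten_classical_well-posed} under Assumption 2 to obtain well-posedness of the classical formulations \eqref{eq:conce_classical_weak} and \eqref{eq:poten_classical_weak}. The ``only if'' direction of Lemma \ref{thm:mutually_well-posed} (parts (i) and (ii)) then immediately transfers these to the mixed formulations \eqref{eq:conce_mixed_weak} and \eqref{eq:poten_mixed_weak}: a unique mixed solution $(c,\bsj)$ (respectively $(\phi,\bsi)$) is produced by augmenting the classical solution with the flux determined by the constitutive law, and its flux component is controlled by $\|\grad c\|$ (respectively $\|\grad\phi\| + \|\bsS\|$), hence by the same data-dependent expression that already bounds $\|c\|_1$ (respectively $\|\phi\|_1$).

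To complete the Babu\v{s}ka--Ne\v{c}as argument, it remains to check that the dual null-space $\scV^0$ defined in \eqref{eq:transpose_nullspace} is trivial for each mixed formulation. For the concentration problem, if $(r,\bsw)\in\scV^{\scM}_{c}$ lies in $\scV^0$, then taking $c=0$ and $\bsj\in\bsL^2(\Omega)$ arbitrary yields $(D^{-1}\bsj,\bsw)_\Omega = 0$, forcing $\bsw = 0$; subsequently taking $\bsj=0$ and $c\in H^1(\Omega)$ arbitrary leaves $b^{\scC}_{c}(c,r)=0$ for all $c\in H^1(\Omega)$, whence the coercivity established in Lemma \ref{thm:conce_classical_well-posed} forces $r=0$. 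The same argument, with $b^{\scC}_{\phi}$ and Lemma \ref{thm:poten_classical_well-posed} replacing their concentration counterparts, handles the potential case. I expect no genuine obstacle here, since the nontrivial analytic work---the coercivity proof via Rellich--Kondrachov and the equivalence of classical and mixed formulations---has already been executed in the preceding lemmas; the only delicate point is the clean separation of trial components used to pin down $\scV^0=\{0\}$, which reduces the problem to the classical coercive setting.
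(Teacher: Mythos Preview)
Your overall strategy---chain Lemmas \ref{thm:conce_classical_well-posed}, \ref{thm:poten_classical_well-posed}, \ref{thm:mutually_well-posed} and then verify that the transposed null-space $\scV_0$ is trivial---is exactly the paper's route. However, your verification of $\scV_0=\{0\}$ for the mixed formulations contains a slip.

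In the mixed concentration form as it appears in \eqref{eq:conce_mixed_weak}, the flux $\bsj$ couples not only to $\bsw$ through $(D^{-1}\bsj,\bsw)_\Omega$ but also to $r$ through $-\dt(\bsj,\grad r)_\Omega$ in the first equation. Hence, setting $c=0$ and letting $\bsj$ vary does \emph{not} yield $(D^{-1}\bsj,\bsw)_\Omega=0$; it yields $(\bsj,\,D^{-1}\bsw-\dt\grad r)_\Omega=0$ for all $\bsj$, so only $\bsw=\dt D\grad r$. The same coupling occurs in the potential problem via $-(\bsi,\grad\zeta)_\Omega$, giving $\bsv=\kappa\grad\zeta$ rather than $\bsv=0$. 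Your argument is easily repaired: substituting $\bsw=\dt D\grad r$ (resp.\ $\bsv=\kappa\grad\zeta$) into the step with $\bsj=0$ (resp.\ $\bsi=0$) produces precisely $b^{\scC}_c(c,r)=0$ for all $c$ (resp.\ $b^{\scC}_\phi(\phi,\zeta)=0$ for all $\phi$), and then the classical coercivity from Lemmas \ref{thm:conce_classical_well-posed} and \ref{thm:poten_classical_well-posed} forces $r=0$ and $\bsw=0$ as you intended.

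The paper avoids this detour by arguing $\scV_0=\{0\}$ at the level of the \emph{classical} bilinear forms $b^{\scC}_c$, $b^{\scC}_\phi$ themselves: since those forms are symmetric and coercive, $v\in\scV_0$ allows the choice $u=v$, giving $0=b(v,v)\geq\alpha\|v\|_1^2$ in one line. The transfer to the mixed setting is then left entirely to Lemma \ref{thm:mutually_well-posed}. Your direct attack on the mixed $\scV_0$ is a legitimate alternative, but it requires handling the off-diagonal coupling you overlooked.
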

\begin{proof}
    The only fact still unproven is the triviality of $\scV^0$. In the classical formulations, the triviality of such a nullspace is a direct consequence of the bilinear form's coercivity. Let $v\in\scV_0$, so that $b(u,v)$ vanishes for every trial $u$, including $c=v$ thanks to symmetry. As a result, $0=b(v,v)\geq \alpha \| v \|^2_{1}$, which is only possible if $v\equiv 0$. Since coercivity was proven for both problems from assumptions 1 and 2, we now have full well-posedness of these classical weak problems in the sense of Babu\v{s}ka-Ne\v{c}as. By lemma \ref{thm:mutually_well-posed}, the mixed formulations are well-posed too.
\end{proof}
\section{Broken variational formulations and discretization}
\subsection{Abstract setting}
\label{sec:abstract_setting_broken}
We introduce another Hilbert space into the framework, the \emph{trace} trial space $\hat{\scU}$. Let $\scW$ be a superspace of $\scV$, with a possibly weaker norm $\|\cdot\|_{\scW}$ that will be transferred to $\scV$ as well. Let us extend bilinear form $b$ to $\scU\times\scW$ and assume that there is a new bilinear functional $\hat{b}:\hat{\scU}\times\scW\to\R$. We can so formulate a new abstract variational formulation
\begin{equation}
    \left\{ \begin{array}{cc}
    \text{Find }(u,\hat{u})\in\scU\times\hat{\scU} \text{ such that:}      \\
     b(u,v)+\hat{b}(\hat{u},v)=\ell(v) \qquad \text{for every } v\in\scW,
    \end{array}
    \right.
    \label{eq:variational_abstract_broken}
\end{equation}

The following assumptions and theorem are practically copied from \cite{BrokenForms15}, with a few minor adaptations.
\begin{assumption}
    The inf-sup condition \eqref{eq:infsup_abstract} holds, provided $\scV$ is equipped with norm $\|\cdot\|_{\scW}$
\end{assumption}
\begin{assumption}
    The spaces $\scV$, $\scW$ and $\hat{\scU}$ satisfy
    $$
    \scV = \{ v\in\scW:\ \hat{b}(\hat{u},v)=0\ \forall\hat{u}\in\hat{\scU})\},
    $$
    and there is a positive constant $\hat{\alpha}$ such that
    $$
    \hat{\alpha}\leq \sup\limits_{v\in\scW\setminus\{0\}} \frac{\hat{b}(\hat{u},v)}{\| v \|_{\scW}}\ 
    \forall\hat{u}\in\hat{\scU}).
    $$
\end{assumption}
The following is the key piece to obtain well-posedness of variational problems with broken test spaces. The proof can be seen in \cite[\S3]{BrokenForms15}.
\begin{theorem}
\label{thm:broken_well-posedness}
    Assumptions 3 and 4 imply
    \begin{equation*}
        \alpha_1 \|(u,\hat{u}) \|_{\scU\times\hat{\scU}} \leq 
        \sup\limits_{v\in\scW\setminus\{0\}}\frac{b(u,v)+\hat{b}(\hat{u},v)}{\|v\|_{\scW}},\ \forall (u,\hat{u})\in\scU\times\hat{\scU}.
    \end{equation*}
    Moreover, if $\scW_0=\{v\in\scW:\ b(u,v)+\hat{b}(\hat{u},v)=0\ \forall (u,\hat{u})\in\scU\times\hat{\scU} \}$,
    then $\scV_0=\scW_0$. Consequently, if $\scV_0$ is trivial, then \eqref{eq:variational_abstract_broken} is uniquely solvable, and the component $u$ of the solution coincides with the solution of \eqref{eq:variational_abstract}.
\end{theorem}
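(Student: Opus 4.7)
My plan is to establish the combined inf-sup bound by constructing a single test function $v\in\scW$ out of two ingredients, one tuned to control the $u$-part and one to control the $\hat{u}$-part. Given $(u,\hat{u})\in\scU\times\hat{\scU}$ and any small $\delta>0$, Assumption 3 (with $\scV$ carrying the inherited norm $\|\cdot\|_{\scW}$) supplies $v_1\in\scV$ with $\|v_1\|_{\scW}=1$ and $b(u,v_1)\geq(\alpha-\delta)\|u\|_{\scU}$, and Assumption 4 supplies $v_2\in\scW$ with $\|v_2\|_{\scW}=1$ and $\hat{b}(\hat{u},v_2)\geq(\hat{\alpha}-\delta)\|\hat{u}\|_{\hat{\scU}}$. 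The crucial free gift is that $\hat{b}(\hat{u},v_1)=0$, since $v_1\in\scV$ and Assumption 4 identifies $\scV$ as the annihilator of $\hat{b}(\hat{\scU},\cdot)$ inside $\scW$.

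Next I would form the combined test $v=v_1+s\,v_2$ with a scaling parameter $s>0$ chosen to absorb the sign-indefinite cross term $s\,b(u,v_2)$ into the positive contribution $b(u,v_1)$. Writing $M$ for the continuity constant of $b$ on $\scU\times\scW$, the choice $s=\alpha/(2M)$, together with $\delta$ sufficiently small, should yield
\begin{equation*}
    b(u,v)+\hat{b}(\hat{u},v) \;\geq\; \tfrac{1}{2}(\alpha-2\delta)\,\|u\|_{\scU} \;+\; s\,(\hat{\alpha}-\delta)\,\|\hat{u}\|_{\hat{\scU}},
\end{equation*}
while $\|v\|_{\scW}\leq 1+s$. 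Taking the supremum and letting $\delta\to 0$ produces an explicit joint inf-sup constant $\alpha_1$ depending only on $\alpha$, $\hat{\alpha}$, and $M$. I expect this scaling and bookkeeping step to be the main obstacle: the rest is recombination, but the correct calibration of $s$ is what renders the cross term innocuous while keeping both the $u$- and $\hat{u}$-contributions positive.

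The identity $\scV_0=\scW_0$ is then a short double-inclusion argument. If $v\in\scW_0$, taking $\hat{u}=0$ forces $b(u,v)=0$ for every $u\in\scU$, while taking $u=0$ forces $\hat{b}(\hat{u},v)=0$ for every $\hat{u}\in\hat{\scU}$; the latter places $v\in\scV$ by Assumption 4, whence $v\in\scV_0$. Conversely, any $v\in\scV_0\subseteq\scV$ is automatically annihilated by $\hat{b}(\hat{\scU},\cdot)$ via Assumption 4, while the defining property of $\scV_0$ kills the $b$ term, so $v\in\scW_0$.

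Finally, with $\scV_0=\scW_0$ trivial, the joint inf-sup combined with Babu\v{s}ka--Ne\v{c}as applied to the combined form on $(\scU\times\hat{\scU})\times\scW$ delivers unique solvability of \eqref{eq:variational_abstract_broken}. To identify the $u$-component with the solution of \eqref{eq:variational_abstract}, one restricts test functions in the broken problem to $v\in\scV\subseteq\scW$; by Assumption 4 the $\hat{b}$ contribution vanishes, so $u$ satisfies $b(u,v)=\ell(v)$ for all $v\in\scV$, and uniqueness of the unbroken problem finishes the identification.
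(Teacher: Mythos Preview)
The paper does not itself supply a proof of this theorem; it simply defers to \cite[\S3]{BrokenForms15}. Your argument is correct and self-contained, so in that sense it goes beyond what the paper provides.

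Compared with the proof in the cited reference, your route is slightly different in its bookkeeping. You build a single combined test function $v=v_1+s\,v_2$ and calibrate the scalar $s$ to absorb the cross term $s\,b(u,v_2)$. The argument in \cite{BrokenForms15} instead bounds the two components separately against the \emph{same} joint supremum: first $\alpha\|u\|_{\scU}\leq\sup_{v\in\scV}\frac{b(u,v)}{\|v\|_{\scW}}=\sup_{v\in\scV}\frac{b(u,v)+\hat{b}(\hat{u},v)}{\|v\|_{\scW}}\leq\sup_{v\in\scW}\frac{b(u,v)+\hat{b}(\hat{u},v)}{\|v\|_{\scW}}$ (using that $\hat{b}$ vanishes on $\scV$), and then $\hat{\alpha}\|\hat{u}\|_{\hat{\scU}}\leq\sup_{v\in\scW}\frac{\hat{b}(\hat{u},v)}{\|v\|_{\scW}}\leq\sup_{v\in\scW}\frac{b(u,v)+\hat{b}(\hat{u},v)}{\|v\|_{\scW}}+M\|u\|_{\scU}$, after which one substitutes the first bound into the second. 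This avoids picking near-supremizers and tuning $s$, at the cost of a slightly less constructive flavour. Both approaches rely on exactly the same two structural facts (the annihilation $\hat{b}(\hat{\scU},\scV)=0$ and boundedness of $b$ on $\scU\times\scW$) and produce comparable constants; your version has the mild advantage of exhibiting an explicit near-optimal test function.

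Your treatment of $\scV_0=\scW_0$ and the identification of the $u$-component with the unbroken solution are exactly the standard arguments and match what one finds in the reference.
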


\subsection{Application to broken formulations of the concentration and potential equations}

Let $\mesh$ be a mesh or triangulation of $\Omega$, where each element is polygonal(or polyhedral) and convex, and the usual mesh regularity criteria are fulfilled. Let us now define the broken or product $H^1$ space:
$$
H^1(\mesh):=\{u\in L^2(\Omega):\ \forall K\in\mesh\ u|K\in H^1(\Omega) \}
\cong \prod\limits_{K\in\mesh} H^1(K),
$$
with norm $\| u \|^2_{1,\mesh} := \sum\limits_{K\in\mesh} \| u|_K\; \|^2_{1,K}$. 
Now, if $\tr_{\nml}^K$ is the normal trace operator of $H(\div,K)$ functions onto the element boundary $\bdry K$, let $\tr^{\mesh}_{\nml}:H(\div,\Omega)\to \prod\limits_{K\in\mesh} H^{-1/2}(\bdry K)$ be the skeleton normal trace operator. The range of this operator defines the space
$$
H^{-1/2}(\bdry\mesh):=\tr^{\mesh}_{\nml} H(\div,\Omega).
$$
We require a final bilinear operation on $H^{-1/2}(\bdry\mesh) \times H^1(\mesh)$:
$$
\langle \hat{q}_n, u \rangle_{\bdry\mesh}:= \sum\limits_{K\in\mesh} \langle \tr_{\nml}^K(\bsq|_K), (u)_K \rangle_{\bdry K} ,
$$
where $\bsq$ is any $H(\div,\Omega)$ extension of $\hat{q}_n$.
Notice that, elementwise, the pairing $\langle \tr_{\nml}^K(\bsq|_K), u_K \rangle_{\bdry K}$ makes sense as a duality pairing, because $\tr_{\nml}^K(\bsq|_K) \in H^{-1/2}(\bdry K)$ and $u_K \in H^1(K)$, so that its trace lies in $H^{1/2}(\bdry K)$.

At last, we need an interesting result developed in \cite[Thm. 2.3]{BrokenForms15}, where it is shown that for any $\hat{\sigma}_n\in H^{-1/2}(\bdry\mesh)$,
\begin{equation}
    \label{eq:skeleton_dual_norm}
    \| \hat{\sigma}_n \|_{-1/2,\bdry\mesh} = 
    \sup\limits_{u\in H^1(\mesh)\setminus \{0\}} 
    \frac{\langle \hat{\sigma}_n , u \rangle_{\bdry\mesh}}{\| u \|_{1,\bdry\mesh}}.
\end{equation}

With the new tools at hand, we postulate the \emph{broken mixed variational formulation} for the concentration problem.
\begin{equation}
\left\{
\begin{array}{ll}
\text{Find }c\in H^1(\Omega),\;\bsj\in \bsL^2(\Omega),\;\hat{j}_{n}\in H^{-1/2}(\bdry\mesh)
\text{ such that:}      \\
        \left(c,r\right)_\Omega - \dt\left(\bsj,\grad r\right)_\Omega 
        +\dt\langle \hat{j}_{n} , r \rangle_{\bdry\mesh}  \;=\; 
        \left(c_\prev,r\right)_\Omega - \dt \langle J, r \rangle_{\bdry\Omega}
        \qquad \text{for every } r\in H^1(\mesh)    \\
        \left(\grad c,\bsw\right)_\Omega + \left(D^{-1}\bsj, \bsw\right)_\Omega \;=\; 
        0  \qquad \text{for every } \bsw\in \bsL^2(\Omega)
\end{array}
\right.
\label{eq:conce_mixed_weak_broken}
\end{equation}
Similarly, the potential counterpart reads as follows:
\begin{equation}
\left\{
\begin{array}{ll}
\text{Find }\phi\in H^1_{\Gamma_D}(\Omega),\;\bsj\in \bsL^2(\Omega),\;\hat{i}_{n}\in H^{-1/2}(\bdry\mesh)
\text{ such that:}      \\
        \langle \beta \phi,\zeta \rangle_{\Gamma_R} - \left(\bsi,\grad \zeta\right)_\Omega 
        +\langle \hat{i}_{n} , \zeta \rangle_{\bdry\mesh} \;=\; 
        - \langle I, \zeta \rangle_{\Gamma_N} - \langle R, \zeta \rangle_{\Gamma_R}
        \qquad \text{for every } \zeta\in H^1(\mesh)    \\
        \left(\grad \phi,\bsv\right)_\Omega + \left(\kappa^{-1}\bsi, \bsv\right)_\Omega \;=\; 
        -\left(\kappa^{-1}\bsS,\bsv\right)_\Omega  \qquad \text{for every } \bsv\in \bsL^2(\Omega)
\end{array}
\right.
\label{eq:poten_mixed_weak_broken}
\end{equation}

The new ingredients present in \eqref{eq:conce_mixed_weak_broken} and \eqref{eq:poten_mixed_weak_broken} are once again organized and named consistently with the notation of subsection \ref{sec:abstract_setting_broken}.
\paragraph{Broken mixed variational formulation for the concentration problem:}
\begin{itemize}
    \item \emph{Field trial space:} $\scU^{\scB}_{c}:=H^1(\Omega)\times\bsL^2(\Omega)\ni (c,\bsj)$.
    \item \emph{Trace trial space:} $\hat{\scU}^{\scB}_{c}:=H^{-1/2}(\bdry\mesh)\ni \hat{j}_n$.
    \item \emph{Broken test space:} $\scW^{\scB}_{c}:=H^1(\Omega)\times\bsL^2(\Omega)\ni (r,\bsw)$.
    \item \emph{Bilinear forms:} $b^{\scB}_{c}:\scU^{\scB}_{c} \times \scW^{\scB}_{c}$, and 
    $\hat{b}^{\scB}_{c}:\hat{\scU}^{\scB}_{c} \times \scW^{\scB}_{c}$, with
    $$
    b^{\scB}_{c}((c,\bsj),(r,\bsw)):=\left(c,r\right)_\Omega + \dt\left(D\grad c,\grad r\right)_\Omega
    \left(\grad c,\bsw\right)_\Omega + \left(D^{-1}\bsj, \bsw\right)_\Omega
    \,,\quad
    \hat{b}^{\scB}_{c}(\hat{j}_n,(r,\bsw)):=\dt\langle \hat{j}_{n} , r \rangle_{\bdry\mesh}\,.
    $$
\end{itemize}
\paragraph{Broken mixed variational formulation for the potential problem:}
\begin{itemize}
    \item \emph{Field trial space:} $\scU^{\scB}_{\phi}:=H^1_{\Gamma_D}(\Omega)\times\bsL^2(\Omega)\ni (\phi,\bsi)$.
    \item \emph{Trace trial space:} $\hat{\scU}^{\scB}_{c}:=H^{-1/2}(\bdry\mesh)\ni \hat{i}_n$.
    \item \emph{Broken test space:} $\scW^{\scB}_{\phi}:=H^1_{\Gamma_D}(\Omega)\times\bsL^2(\Omega)\ni (\zeta,\bsv)$.
    \item \emph{Bilinear forms:} $b^{\scB}_{\phi}:\scU^{\scB}_{\phi} \times \scW^{\scB}_{\phi}$, and 
    $\hat{b}^{\scB}_{\phi}:\hat{\scU}^{\scB}_{\phi} \times \scW^{\scB}_{\phi}$, with
    $$
    b^{\scB}_{\phi}((\phi,\bsi),(\zeta,\bsv)):=\left(\kappa\grad \phi,\grad \zeta\right)_\Omega + \langle \beta\phi, \zeta \rangle_{\Gamma_R}
    \left(\grad \phi,\bsv\right)_\Omega + \left(\kappa^{-1}\bsi, \bsv\right)_\Omega
    \,,\quad
    \hat{b}^{\scB}_{\phi}(\hat{i}_n,(\zeta,\bsv)):=\langle \hat{i}_{n} , \zeta \rangle_{\bdry\mesh}\,.
    $$
\end{itemize}

We postulate as a theorem the well-posedness of the broken formulations just assembled.
\begin{theorem}
    Let assumptions 1 and 2 hold. Then, the broken mixed formulations \eqref{eq:conce_mixed_weak_broken} and \eqref{eq:poten_mixed_weak_broken} are well-posed.
\end{theorem}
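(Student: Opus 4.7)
The plan is to invoke Theorem \ref{thm:broken_well-posedness} separately for each of the two broken formulations. Both fit the abstract template of Section \ref{sec:abstract_setting_broken} with the field trial space $\scU^{\scB}$, trace trial space $\hat{\scU}^{\scB}$, and broken test superspace $\scW^{\scB}$ sitting above the conforming mixed test space $\scV^{\scM}$ (which plays the role of $\scV$ in the abstract framework). Thus the task reduces to verifying Assumptions 3 and 4 for each problem, and then checking that $\scV_0$ is trivial so as to obtain actual well-posedness in the Babu\v{s}ka-Ne\v{c}as sense.

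For Assumption 3, the key observation is that on the conforming subspace $\scV^{\scM}\subset\scW^{\scB}$ the broken norm $\|\cdot\|_{1,\mesh}$ collapses to the standard $\|\cdot\|_{1}$ norm, a direct consequence of the definition $\|u\|_{1,\mesh}^2=\sum_{K\in\mesh}\|u|_K\|_{1,K}^2$ applied to globally $H^1$ functions; the $\bsL^2$ component of the test space is untouched. Consequently, the inf-sup required in Assumption 3 is nothing but the inf-sup for $b^{\scM}$ on $\scU^{\scM}\times\scV^{\scM}$, which I would extract from Corollary \ref{thm:mixed_well-posedness} via Babu\v{s}ka-Ne\v{c}as. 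The triviality of $\scV_0$ needed for that step was already established inside the proof of Corollary \ref{thm:mixed_well-posedness} through coercivity.

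Next, I would turn to Assumption 4, which has two parts. The characterization $\scV=\{v\in\scW:\ \hat{b}(\hat{u},v)=0\ \forall\hat{u}\}$ follows from a jump-based identification: since $\hat{b}^{\scB}$ only involves the scalar test component through $\langle \hat{q}_n, r\rangle_{\bdry\mesh}$ (with $\bsw$ entering neither side and hence remaining free), the vanishing of this pairing for every $\hat{q}_n\in H^{-1/2}(\bdry\mesh)$ forces the normal-trace-tested jumps of $r$ across interior element faces to vanish, which is precisely the condition $r\in H^1(\Omega)$, together (in the potential case) with $r|_{\Gamma_D}=0$. The inf-sup for $\hat{b}^{\scB}$ then follows directly from identity \eqref{eq:skeleton_dual_norm}: restricting the supremum to test pairs of the form $(r,\mathbf{0})\in\scW^{\scB}$ recovers the $H^{-1/2}(\bdry\mesh)$ dual norm, giving nontrivial inf-sup constants (namely $\dt$ for the concentration problem and $1$ for the potential problem).

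With Assumptions 3 and 4 in hand, Theorem \ref{thm:broken_well-posedness} delivers the global inf-sup estimate for $b^{\scB}+\hat{b}^{\scB}$ and the identity $\scW_0=\scV_0=\{0\}$, which together yield well-posedness of each broken formulation and the coincidence of its field component with the solution of the mixed formulation. The step I expect to demand the most careful bookkeeping is the characterization in Assumption 4(a) for the potential problem: one must ensure that the freedom of $\hat{i}_n\in H^{-1/2}(\bdry\mesh)$ on the portion $\Gamma_D$ correctly recovers the essential constraint $r|_{\Gamma_D}=0$ inherited from $\scV^{\scM}_\phi$, without accidentally over-constraining $r$ on $\Gamma_R$ or $\Gamma_N$ where only natural conditions should be present. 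Everything else is, in effect, a consistent translation between conforming and broken norms that the abstract machinery of \cite{BrokenForms15} was designed to accommodate.
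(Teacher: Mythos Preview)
Your proposal is correct and follows essentially the same route as the paper: verify Assumptions 3 and 4 (via Corollary \ref{thm:mixed_well-posedness} and identity \eqref{eq:skeleton_dual_norm}, respectively), note that the broken and unbroken norms coincide on conforming test functions, and then invoke Theorem \ref{thm:broken_well-posedness} together with the triviality of $\scV_0$. You are in fact more careful than the paper in flagging the bookkeeping for the characterization in Assumption 4 for the potential problem on $\Gamma_D$ versus $\Gamma_N\cup\Gamma_R$; the paper's proof only sketches the inclusion $\scV^{\scM}_\phi\subseteq\{v:\hat b(\cdot,v)=0\}$ and does not explicitly address the reverse containment you rightly single out.
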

\begin{proof}
    We must show that the hypotheses of theorem \ref{thm:broken_well-posedness} are true. Assumption 3 is the subject of corollary \ref{thm:mixed_well-posedness}, which is equivalent to having the corresponding inf-sup conditions for the mixed formulations. An important step requires making clear that the broken test norm $\| \cdot\|_{\scW}=\| \cdot \|_{1,\mesh}$ coincides with $\| \cdot\|_{\scV}=\| \cdot \|_{1}$ when restricted to ``unbroken'' $H^1(\Omega)$ functions:
    $$
    \alpha\| u\|_{\scU} \leq 
    \sup\limits_{v\in\scV\setminus\{0\}}\frac{b(u,v)}{\|v\|_{\scV}} =
    \sup\limits_{v\in\scV\setminus\{0\}}\frac{b(u,v)}{\|v\|_{\scW}}.
    $$
    Satisfaction of assumption 4 is achieved by definition of the bilinear form on $\hat{\scU}\times\scW$ given by the duality pairing $\langle \cdot, \cdot\rangle_{\bdry\mesh}$. Consider the potential problem, so that $\langle \hat{i}_n, \phi \rangle_{\bdry\mesh}$ is decomposed elementwise into $\langle \bsq, (\phi)_K \rangle_{\bdry\mesh}$. But, when using and ``unbroken'' $\phi\in H^1_{\Gamma_D}(\Omega)$, $(\phi)_K$ can be identified as the restriction $\phi|_K$. The zero trace on boundary part $\Gamma_D$ removes the contribution of the mesh facets that lie on that boundary. Moreover, it is well known that $H^1$ functions do not possess jumps across interfaces of adjacent elements, and since the normal trace of $\bsq$ changes sign between adjacent elements, the total sum vanishes. The inf-sup condition on the traces is given directly by \eqref{eq:skeleton_dual_norm}, with equality rather than inequality. Finally, the triviality of $\scV_0$ was proven above. Consequently, all of the requisites are ready and theorem \ref{thm:broken_well-posedness} guarantees the well-posedness of our broken mixed variational formulations.
\end{proof}

\section{Discretization with the Discontinuous Petrov-Galerkin method}

The Discontinuous Petrov-Galerkin method (DPG) is a finite element framework that, given a conforming finite-dimensional trial subspace $\scU^h\subset \scU$, guarantees discrete stability by choosing an appropriate conforming discrete test space $\scW^h\subset\scW$ \cite{demkowicz2015encyclopedia}. By Babu\v{s}ka's theorem \cite{babuvska1971error}, a Petrov-Galerkin method that verifies a discrete inf-sup condition is stable, that is:
$$
\| u - u^h\| \leq C \inf\limits_{w^h\in \scU^h} \| u - w^h\|_{\scU},
$$
where $C>0$ is a constant independent of the mesh size, and $u$ is the solution to the original problem and $u^h$ is the solution to the discrete problem
\begin{equation}
    \left\{ \begin{array}{cc}
    \text{Find }u^h\in\scU \text{ such that:}      \\
     b(u^h,v^h)=\ell(v^h) \qquad \text{for every } v^h\in\scW^h,
    \end{array}
    \right.
    \label{eq:variational_abstract_discrete}
\end{equation}
The issue is that the discrete inf-sup condition does not follow from the continuous one. For details on the DPG strategy for attaining this, we refer the reader to \cite{demkowicz2010class,demkowicz2011class,demkowicz2011analysis,DPGOverview}, among others. We skip directly to the formulations.

\paragraph{Discrete version of the broken mixed variational formulation for the concentration problem.}
\begin{equation}
\left\{
\begin{array}{ll}
   \text{Find }(c^h,\bsj^h,\hat{j}_n^h)\in \scU^{\scB,h}_{c}\times \hat{\scU}^{\scB,h}_{c})
\text{ such that:}      \\
        b^{\scB}_{c}((c^h,\bsj^h),(r^h,\bsw^h)) + \hat{b}^{\scB}_{c}(\hat{j}_n^h,(r^h,\bsw^h))
         \;=\; 
        \ell((r^h,\bsw^h))  \qquad \text{for every } (r^h,\bsw^h)\in \scW^{\scB,h}_{c}
\end{array}
\right. 
\label{eq:conce_mixed_broken_discrete}
\end{equation}

\paragraph{Discrete version of the broken mixed variational formulation for the potential problem.}
\begin{equation}
\left\{
\begin{array}{ll}
   \text{Find }(\phi^h,\bsi^h,\hat{i}_n^h)\in \scU^{\scB,h}_{\phi}\times \hat{\scU}^{\scB,h}_{\phi})
\text{ such that:}      \\
        b^{\scB}_{\phi}((\phi^h,\bsi^h),(\zeta^h,\bsv^h)) + \hat{b}^{\scB}_{\phi}(\hat{i}_n^h,(\zeta^h,\bsv^h))
         \;=\; 
        \ell((\zeta^h,\bsv^h))  \qquad \text{for every } (\zeta^h,\bsv^h)\in \scW^{\scB,h}_{\phi}
\end{array}
\right. 
\label{eq:poten_mixed_broken_discrete}
\end{equation}

Notice that in the previous two problems, we are taking test functions living in the broken space. That is, variational problems \eqref{eq:conce_mixed_broken_discrete} and \eqref{eq:poten_mixed_broken_discrete} are conforming discretizations of \eqref{eq:conce_mixed_weak_broken} and \eqref{eq:poten_mixed_weak_broken}, respectively.

Consider an integer $p\geq 1$, which is the nominal order of the finite element spaces. We use globally continuous, piecewise polynomials of degree $p$ to discretize the $H^1$ trial variables. The $\bsL^2$ trial variables are discontinous vector fields, piecewise polynomials of degree $p-1$. The discrete space for the normal traces, $\hat{\scU}^{h}\subset H^{-1/2}(\bdry\mesh)$, is obtained as the range of trace operator $\tr^{\mesh}_{\nml}$ applied to the Raviart-Thomas space of degree $p-1$ (i.e., it consists of discontinuous functions defined on the facets of the elements, piecewise polynomials of degree $p-1$). The construction of the shape functions may follow the details in \cite{Fuentes2015,demkowicz2023mathematical}. For sufficiently regular solutions, the choice of these spaces guarantee
$$
\inf\limits_{w^h\in \scU^h} \| u - w^h\|_{\scU} \leq C(u) h^p,
$$
where the constant $C(u)$ is independent of the mesh size $h$. This approximability estimate, along with the stability provided by the DPG methodology, ensures convergence.

The construction of the discrete space requires a special procedure. We denote by $B:\scU\to\scW'$ the action of bilinear functional $b(\cdot,\cdot)$ on any $u\in\scU$, turning it into a continuous linear functional over $\scW$. Let $\mcR:\scW\to\scW'$ be the Riesz map of Hilbert space $\scW$, then we define the \emph{trial-to-test} mapping, $\bfT:=\mcR^{-1}\circ B$. With this mapping, we set $\scW^h=\bfT(\scU^h)$, and it can be shown that the discrete inf-sup follows directly from this space choice \cite{QiuFortin}. Our test space, has an $\bsL^2$ component, whose Riesz map can be inverted, but the other component is a broken $H^1$ space. Inverting the Riesz map is unfeasible for that part of our test space. For that reason, for spaces other than $L^2$ we approximate the Riesz map with that of an \emph{enriched} finite dimensional subspace $\scW^{\enr}\subset\scW$.In practice, we choose the enriched space to be a space of polynomials of degree $p+\Delta p$. Below, we assume that $\Delta p$ is chosen so that it fulfills the final assumption.
\begin{assumption}
    The enriched test space $\scW^{\enr}$ is such that there exists a bounded linear operator $\Pi_F:\scW\to\scW^{\enr}$, known as \emph{Fortin} operator, with the property
    $$
    b(u_h,\Pi_F v_h - v_h) + \hat{b}(\hat{u_h},\Pi_F v_h - v_h)=0\ \forall (u,\hat{u})\in\scU\times\hat{\scU}.
    $$
    The value of the operator norm for $\Pi_F$ is given by $C_F>0$, that is, $C_F$ is the smallest number such that
    $$
    \| \Pi_F v \|_{\scW} \leq C_F \| v \|_{\scW}\quad\forall v\in\scW.
    $$
\end{assumption}
\begin{remark}
    An extensive work on studying Fortin operators applicable to the DPG method has been published to date. We refer, for instance, to \cite{QiuFortin,Nagaraj2015,BrokenForms15,demkowicz2020construction}. A rule of thumb would say that for quadrilaterals or hexahedra, $\Delta p=1$ may suffice, while for simplicial elements $\Delta p \geq d$ is frequently required.
\end{remark}

The optimal discrete test functions $\bsw^h$ and $\bsv^h$ in \eqref{eq:conce_mixed_broken_discrete} and \eqref{eq:poten_mixed_broken_discrete}, can be built thanks to the FOSLS (First-Order-System Least-Squares) method \cite{bochev2004least,bochev2009least}. Following that idea, we set
\begin{equation}
    \bsw^h=\grad \delta c^h    + D^{-1} \delta\bsj^h,\qquad\qquad 
    \bsv^h=\grad \delta \phi^h + \kappa^{-1} \delta\bsi^h,
    \label{eq:optimal_L2_test}
\end{equation}
where $(\delta c^h , \delta \bsj)\in \scU^{\scB,h}_{c}$ and $(\delta \phi^h , \delta \bsi)\in \scU^{\scB,h}_{\phi}$.
On the other hand, the remaining components of the discrete test spaces must be found through the solution of another variational problem.
\begin{equation}
\begin{aligned}
    ( r^{h} , r^{\enr})_{H^1(\mesh)} &=
    b^{\scB}_{c}((c^h,\bsj^h),(r^{\enr},0)) + \hat{b}^{\scB}_{c}(\hat{j}_n^h,(r^{\enr},0)), 
    \ \forall (r^{\enr},0)\in \scW^{\scB,\enr}_{c}\;, \\
    ( \zeta^{h},\zeta^{\enr})_{H^1(\mesh)} &=
    b^{\scB}_{\phi}((\phi^h,\bsi^h),(\zeta^h,\bsv^h)) + \hat{b}^{\scB}_{\phi}(\hat{i}_n^h,(\zeta^h,\bsv^h))
    \ \forall (\zeta^{\enr},0)\in \scW^{\scB,\enr}_{\phi}\;.
\end{aligned}
    \label{eq:optimal_H1_test}
\end{equation}
Fortunately, these variational problems are possible to implement locally (at each element $K\in\mesh$). We close this derivations by stating the theorem that encompasses the ultimate result.
\begin{theorem}
    \label{thm:convergence}
    Let assumptions 1, 2 and 5 hold. Suppose we are given a mesh $\mesh$ of $\Omega$ with maximum element size $h$, and that we choose conforming finite-dimensional trial spaces 
    $\scU^{\scB,h}_{c} \times\hat{\scU}^{\scB,h}_{c}$ and $\scU^{\scB,h}_{\phi} \times \hat{\scU}^{\scB,h}_{\phi}$, consisting in polynomial spaces of nominal order $p\geq 1$. Then, if the test spaces $\scW^{\scB,h}_{c}$ and $\scW^{\scB,h}_{c}$ are the span of the functions computed by procedures \eqref{eq:optimal_L2_test} and \eqref{eq:optimal_H1_test}, then the problems \eqref{eq:conce_mixed_broken_discrete} and \eqref{eq:poten_mixed_broken_discrete} are well-posed, and convergent to the exact solutions of \eqref{eq:conce_mixed_weak_broken} and \eqref{eq:poten_mixed_weak_broken}, respectively, with order of convergence $\mcO(h^p)$.
\end{theorem}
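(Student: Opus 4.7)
The plan is to assemble the theorem as a direct consequence of the DPG machinery: continuous well-posedness plus a Fortin operator give discrete inf-sup stability, which by Babu\v{s}ka's theorem yields quasi-optimal convergence, and standard polynomial approximation estimates then produce the $\mathcal{O}(h^p)$ rate. I will treat the concentration and potential problems in parallel, since the structural ingredients are identical; the only differences are the choice of field trial space and the specific bilinear forms, both already verified in the preceding sections.

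First, I would invoke the well-posedness of the broken mixed formulations \eqref{eq:conce_mixed_weak_broken} and \eqref{eq:poten_mixed_weak_broken}, i.e. the inf-sup conditions on the extended bilinear form $B^{\scB}((u,\hat{u}),v) := b^{\scB}(u,v) + \hat{b}^{\scB}(\hat{u},v)$ with respect to the broken test norm $\|\cdot\|_{\scW} = \|\cdot\|_{1,\mesh}$ on the $H^1$ component and $\|\cdot\|$ on the $\bsL^2$ component. Next, I would verify that the optimal discrete test functions defined by \eqref{eq:optimal_L2_test} and \eqref{eq:optimal_H1_test} indeed realise the trial-to-test map on the enriched space $\scW^{\enr}$: the $\bsL^2$ component is handled exactly by \eqref{eq:optimal_L2_test} because the Riesz map of $\bsL^2(\Omega)$ is the identity and the corresponding Riesz representation of $B(u^h,\hat u^h)$ coincides with $\grad \delta c^h + D^{-1}\delta \bsj^h$ (resp.\ $\grad \delta \phi^h + \kappa^{-1}\delta\bsi^h$); the $H^1(\mesh)$ component is handled, up to the enrichment, by the local solves in \eqref{eq:optimal_H1_test}. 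Locality of these solves is automatic because the broken $H^1(\mesh)$ Riesz map decouples element by element.

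The key step, and the one doing the real work, is to pass from the continuous inf-sup to a discrete inf-sup on $\scU^{\scB,h}\times\hat{\scU}^{\scB,h}$. I would use Assumption 5 in the standard Fortin-operator argument: for any discrete $(u^h,\hat u^h)$, pick a continuous supremising $v\in\scW$, apply $\Pi_F$ to get $\Pi_F v\in\scW^{\enr}$, and use the compatibility identity $b(u^h,\Pi_F v - v)+\hat b(\hat u^h,\Pi_F v - v)=0$ together with $\|\Pi_F v\|_{\scW}\le C_F\|v\|_{\scW}$ to obtain
\begin{equation*}
\alpha_1\|(u^h,\hat u^h)\|_{\scU\times\hat{\scU}} \;\le\; C_F \sup_{v^{\enr}\in\scW^{\enr}\setminus\{0\}} \frac{b(u^h,v^{\enr})+\hat b(\hat u^h,v^{\enr})}{\|v^{\enr}\|_{\scW}}.
\end{equation*}
Because the optimal test space $\scW^{\scB,h}=\bfT(\scU^{\scB,h}\times\hat{\scU}^{\scB,h})$ is, by construction, precisely the subspace of $\scW^{\enr}$ that attains the supremum over $\scW^{\enr}$ for each fixed trial, the right-hand side equals the corresponding supremum over $\scW^{\scB,h}$; this is the discrete inf-sup condition with constant $\alpha_1/C_F$.

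With discrete inf-sup in hand, Babu\v{s}ka's theorem \cite{babuvska1971error} gives existence, uniqueness and a C\'ea-type estimate
\begin{equation*}
\|(u,\hat u) - (u^h,\hat u^h)\|_{\scU\times\hat{\scU}} \;\le\; \frac{M}{\alpha_1/C_F}\inf_{(w^h,\hat w^h)\in\scU^{\scB,h}\times\hat{\scU}^{\scB,h}}\|(u,\hat u)-(w^h,\hat w^h)\|_{\scU\times\hat{\scU}},
\end{equation*}
where $M$ is the continuity constant of $B^{\scB}$. The proof is then closed by the standard polynomial approximation results for continuous $\mathcal{P}^p$, discontinuous $[\mathcal{P}^{p-1}]^d$, and Raviart--Thomas normal traces of degree $p-1$ on a shape-regular mesh, which yield best-approximation error of order $h^p$ provided the exact solution is sufficiently regular. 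The main obstacle is really a modelling one rather than technical: the Fortin operator in Assumption 5 must be exhibited for the spaces at hand, and the bound $C_F$ must be independent of $h$; once this is granted (as discussed in \cite{QiuFortin,Nagaraj2015,BrokenForms15,demkowicz2020construction}), the rest of the argument is a clean concatenation of Babu\v{s}ka's theorem and standard interpolation estimates.
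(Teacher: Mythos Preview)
Your proposal is correct and follows exactly the route the paper sketches in the paragraphs leading up to the theorem: the paper itself does not provide a formal proof environment for this result, but instead states it as the culmination of the preceding discussion (Babu\v{s}ka's theorem, the trial-to-test construction with the enriched space, Assumption~5 on the Fortin operator, and the approximability estimate $\inf_{w^h}\|u-w^h\|_{\scU}\le C(u)h^p$). Your write-up simply makes explicit the chain of implications---continuous inf-sup from the broken well-posedness theorem, discrete inf-sup via the Fortin argument, quasi-optimality from Babu\v{s}ka, and the $\mathcal{O}(h^p)$ rate from polynomial interpolation---which is precisely what the paper intends but leaves implicit.
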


\section{Conclusions and future directions}
A pair of BVPs posed on the same domain are extensively studied, which are surrogates of the more complicated equations that govern the electrochemical phenomena within a battery. A sequence of implications has let us show well-posedness of advanced variational formulations, starting from simple assumptions on the materials and loads. This suggests that the models are general enough to perform appropriately when integrated in a dynamic, nonlinear solver. A similar result has already been registered, even with many more physical variables interacting with each other, but therein the discretization is carried out with a classical Bubnov-Galerkin method (see \cite{mora2024high}), which also uses a predictor-corrector time integration method, and simulates the three battery materials together. We plan on converging to a similar framework, but using the variational formulations and finite element method herein derived.

We emphasize on the fact that the Robin BC, a condition much less common than the Dirichlet and Neumann counterparts, plays a central role in the potential problem. In sections 2 through 4, analyzing the equation with that BC and obtaining all the related theoretical results, is a considerable contribution to the area of numerical treatment of PDEs.

The DPG method has been selected as the numerical method to approximately solve the broken mixed formulations. The fact that this method is compatible with higher order discretizations, and that the discrete stability issue is circumvented, make it an excellent candidate for becoming a computational and mathematical tool to advance in the science and engineering of batteries. Particularly, the possibility of driving automatic adaptive mesh refinements with its built-in error estimate (see \cite{demkowicz2012class,carstensen2014posteriori}) can be exploited to obtain better meshes for intricate and novel battery configurations. An exploration on its combination with optimization algorithms is in process \cite{mora2024optimization}.

\paragraph{Acknowledgments.}
The author thanks the institutional research funding by \emph{Fundación Universitaria Konrad Lorenz}, in its 2023 and 2024 calls.

\bibliographystyle{apalike}
\bibliography{main}
\end{document}